\newcommand{\lrt}{\longrightarrow}
\newcommand{\C}{\mathbf{C} }
\newcommand{\K}{\mathbf{K} }
\newcommand{\X}{\mathbf{X} }
\newcommand{\Y}{\mathbf{Y} }
\newcommand{\I}{\mathbf{I} }
\newcommand{\F}{\mathbf{F} }
\newcommand{\CO}{\mathcal{O}}
\newcommand{\Z}{\mathbb{Z} }
\newcommand{\Q}{\mathbb{Q} }
\newcommand{\CE}{\mathcal{E}}
\newcommand{\CC}{\mathcal{C} }
\newcommand{\CF}{\mathcal{F} }
\newcommand{\CG}{\mathcal{G} }
\newcommand{\CK}{\mathcal{K} }
\newcommand{\CP}{\mathcal{P} }
\newcommand{\CS}{\mathcal{S} }
\newcommand{\CX}{\mathcal{X} }
\newcommand{\CY}{\mathcal{Y} }
\newcommand{\Flat}{{\rm{Flat}}}
\newcommand{\CBCOTX}{{\mathbf{C}^{\rm{b}}({\rm Cot} X)}}
\newcommand{\CBX}{{\mathbf{C}^{\rm{b}}({\mathfrak{Qco}} X)}}
\newcommand{\CBAFX}{{\mathbf{C}^{\rm{b}}_{\rm{ac}}(\mathrm{FlatX})}}
\newcommand{\KFX}{{\mathbf{K}({\Flat} X)}}
\newcommand{\DFX}{{\mathbf{D}({\Flat} X)}}
\newcommand{\KCOFX}{{\K({\rm{Cof}} X)}}
\newcommand{\KPIFX}{{\K({\rm{Pinf}} X)}}
\newcommand{\KPFX}{{\mathbf{K}_{\rm{p}}({\Flat} X)}}
\newcommand{\CPFX}{{\mathbf{C}_{\rm{p}}({\Flat} X)}}
\newcommand{\im}{{\rm{Im}}}
\newcommand{\pd}{{\rm{pd}}}
\newcommand{\cd}{{\rm{cd}}}
\newcommand{\Hom}{{\rm{Hom}}}
\newcommand{\Ext}{{\rm{Ext}}}
\newtheorem{theorem}{Theorem}[section]
\newtheorem{corollary}[theorem]{Corollary}
\newtheorem{lemma}[theorem]{Lemma}
\newtheorem{proposition}[theorem]{Proposition}
\theoremstyle{definition}
\newtheorem{definition}[theorem]{Definition}
\newtheorem{example}[theorem]{Example}
\newtheorem{remark}[theorem]{Remark}
\theoremstyle{plain}
\newtheorem{stheorem}{Theorem}[subsection]
\newtheorem{sproposition}[stheorem]{Proposition}
\theoremstyle{definition}
\numberwithin{equation}{section}
\begin{document}

\title[flat quasi-coherent sheaves of finite cotorsion dimension]
{flat quasi-coherent sheaves of finite cotorsion dimension}
\author[Esmaeil Hosseini]{Esmaeil Hosseini}

\address{Department of Mathematics, Shahid Chamran University of Ahvaz, Ahvaz, Iran}
 \email{esmaeilmath@gmail.com}

\keywords{Quasi-coherent sheaf, cotorsion dimension, $n$-perfect
scheme,
homotopy category.\\
2010 Mathematical subject classification: 14F05, 18G20, 18E30.}
\begin{abstract}
Let $X$ be e quasi-compact and semi-separated scheme. If every flat
quasi-coherent sheaf has finite cotorsion dimension, we prove that
$X$ is $n$-perfect for some $n\geq 0$. If $X$ is coherent and
$n$-perfect(not necessarily of finite krull dimension), we prove
that every flat quasi-coherent sheaf has finite
 pure injective dimension. Also, we show that there is an
 equivalence $\KPIFX\lrt\DFX$ of homotopy categories, whenever
 $\KPIFX$ is the homotopy category of pure injective flat
 quasi-coherent sheaves and $\DFX$ is the pure derived category of
 flat quasi-coherent sheaves.
\end{abstract}
\maketitle
\section{Introduction}
In this paper, $X$ denotes a quasi-compact and semi-separated
scheme, $\CO_X$-modules are quasi-coherent sheaves on $X$ and all
rings are commutative with identity.

Let $\mathfrak{Qco}X$ be the category of $\CO_X$-modules. An
$\CO_X$-module $\CF$ is called flat if for each $p\in X$,
 $\CF_p$
is a flat $\CO_{X,p}$-module or equivalently the the functor
$\CF\otimes_{\CO_X}-:\mathfrak{Qco}X\lrt\mathfrak{Qco}X$ is exact.
 In \cite[4.2]{EE}, the authors  proved that
the pair $(\mathrm{FlatX},\mathrm{CotX})$ is  a complete cotorsion
theory in $\mathfrak{Qco}X$, whenever FlatX is the class of all flat
$\CO_X$-modules and
$\mathrm{CotX}$=$\{\CG\in\mathfrak{Qco}X:\forall\CF\in\mathrm{FlatX},
~~\Ext^1_X(\CF,\CG)=0\}$, is the class of all cotorsion
$\CO_X$-modules. So,
 for a given $\CO_X$-module $\CG$, we can define cd$\CG$(the cotorsion
dimension of $\CG$).

In this paper we study  those schemes $X$ such that over them every
flat $\CO_X$-module has finite cotorsion dimension. If $X$ is
affine, then every flat $\CO_X$-module has finite projective
dimension and so there exist an integer $n\geq 0$ such that for each
flat $\CO_X$-module $\CF$, $\pd\CF\leq n$(the projective dimension
of $\CF$). Unfortunately, this argument is not true when $X$ is
non-affine, since there is no non-zero projective $\CO_X$-modules.
In the following result we make a proof to such case.
\begin{theorem}
The following conditions are equivalent:\\
(i) Every flat $\CO_X$-module has finite cotorsion dimension.\\
(ii) $X$ is $n$-perfect for some integer $n\geq 0$.
\end{theorem}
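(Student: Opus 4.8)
The plan is to exploit the finite affine geometry of $X$. Being quasi-compact and semi-separated, $X$ has a finite affine open cover $U_1=\Spec R_1,\dots,U_k=\Spec R_k$ whose finite intersections $U_\sigma$ are again affine, $U_\sigma=\Spec R_\sigma$; in particular each inclusion $j_\sigma\colon U_\sigma\hookrightarrow X$ is an affine morphism, so $j_{\sigma*}$ is exact on $\mathfrak{Qco}X$, has vanishing higher direct images, and satisfies $\Ext^m_X(\CG,j_{\sigma*}\CM)\cong\Ext^m_{R_\sigma}(\CG|_{U_\sigma},\CM)$ for $\CG\in\mathfrak{Qco}X$ and $\CM\in\mathfrak{Qco}U_\sigma$. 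Throughout I use that, for the cotorsion pair $(\mathrm{FlatX},\mathrm{CotX})$, one has $\cd\CG\le n$ if and only if $\Ext^{n+1}_X(\CF,\CG)=0$ for every flat $\CF$, so that cotorsion dimension is entirely governed by the vanishing of these $\Ext$ groups.

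For (ii)$\Rightarrow$(i), assume $X$ is $n$-perfect, meaning every flat $R_\sigma$-module has projective dimension at most $n$. Fix a flat $\CF$ and an arbitrary flat $\CG$; I will bound $\cd\CF$ by showing $\Ext^m_X(\CG,\CF)=0$ for $m\gg0$. Because the cover is affine and $X$ is semi-separated, there is a Mayer--Vietoris/\v{C}ech spectral sequence $E_1^{p,q}=\prod_{|\sigma|=p}\Ext^q_{R_\sigma}(\CG|_{U_\sigma},\CF|_{U_\sigma})\Rightarrow\Ext^{p+q}_X(\CG,\CF)$, the local terms being ordinary module $\Ext$ since $\mathfrak{Qco}U_\sigma\simeq R_\sigma\text{-}\Mod$. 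Restrictions of flats are flat, so $\CG|_{U_\sigma}$ and $\CF|_{U_\sigma}$ are flat $R_\sigma$-modules; $n$-perfectness gives $\pd(\CG|_{U_\sigma})\le n$, hence $E_1^{p,q}=0$ for $q>n$. As the nerve of a $k$-element cover has length $\le k-1$, also $E_1^{p,q}=0$ for $p>k-1$. Thus $\Ext^m_X(\CG,\CF)=0$ for $m>n+k-1$, whence $\cd\CF\le n+k-1$, a bound independent of $\CF$.

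For (i)$\Rightarrow$(ii), assume every flat $\CO_X$-module has finite cotorsion dimension and aim to make each $R_\sigma$ into an $n_\sigma$-perfect ring, taking $n=\max_\sigma n_\sigma$ at the end. The affine case of the statement --- over a ring, finite cotorsion dimension of all flat modules forces a uniform finite bound on their projective dimensions --- I will use as an input; this is where the Gruson--Jensen description of flat--cotorsion modules enters. It then suffices to transfer finiteness of cotorsion dimension from $\mathfrak{Qco}X$ to each $R_\sigma\text{-}\Mod$. Given a flat $R_\sigma$-module $M$ with associated sheaf $\CM$ on $U_\sigma$, I would manufacture a genuinely flat $\CO_X$-module $\widetilde{M}$ restricting to $\CM$, obtained by taking a flat cover (available since the pair is complete) of the quasi-coherent sheaf $j_{\sigma*}\CM$ and exploiting the isomorphism of the first paragraph to keep track of $\Ext$; then $\cd_{R_\sigma}M$ is dominated by the finite $\cd_X\widetilde{M}$. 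Running this for every flat $M$ shows that $R_\sigma$ has finite cotorsion dimension on flats, and the ring case returns the desired bound $n_\sigma$.

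The main difficulty lies in this affine reduction for (i)$\Rightarrow$(ii). Restriction carries flats to flats, but the reverse passage is obstructed because $j_{\sigma*}$ destroys flatness, so hypothesis (i) cannot be applied to $j_{\sigma*}\CM$ directly; the real work is to replace $j_{\sigma*}\CM$ by a genuinely flat $\CO_X$-module whose cotorsion dimension over $X$ still bounds $\cd_{R_\sigma}M$ from above, and to verify that passing to the flat cover does not collapse the $\Ext$-computation linking the two. Combined with importing the uniform boundedness from the ring case, this comparison is the crux; the reverse implication (ii)$\Rightarrow$(i), by contrast, is a clean descent argument once the cohomological dimension of the cover is used to absorb the local projective-dimension bound.
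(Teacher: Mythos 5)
Your direction ``(ii)$\Rightarrow$(i)'' is correct and is a genuinely different route from the paper's local-to-global step: where the paper's Theorem \ref{nper} feeds the \v{C}ech coresolution $0\to\CF\to\mathfrak{C}^0(\mathfrak{U},\CF)\to\cdots\to\mathfrak{C}^{m-1}(\mathfrak{U},\CF)\to 0$ into its cotorsion theory $(\CBAFX,\CBCOTX)$ on bounded complexes and splices the resulting resolutions, you apply $\Hom_X(\CG,-)$ to the same coresolution and run the hyperext spectral sequence, using the adjunction $\Ext^q_X(\CG,f_{\sigma*}(-))\cong\Ext^q_{R_\sigma}(\CG|_{U_\sigma},-)$; this yields the explicit uniform bound $\cd\CF\le n+k-1$ with no machinery on categories of complexes, which is a real simplification. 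Two bookkeeping caveats: the paper defines ``$X$ is $n$-perfect'' as $n=\sup\{\cd\CF\mid\CF\in\mathrm{Flat}X\}$, so its (ii)$\Rightarrow$(i) is tautological and your spectral-sequence argument is really the hidden half of its (i)$\Rightarrow$(ii) (the step from ``each $A_i$ is $n_i$-perfect'' to ``$X$ is $n$-perfect''); and your reading of (ii) as a projective-dimension condition on the rings of the cover silently invokes the ring-level equivalence of the pd-based and cd-based notions. Like the paper, you also import without proof the affine uniformization (finite cotorsion dimension of all flat $R$-modules forces a uniform bound), so that input is a fair wash.

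The genuine gap is in your (i)$\Rightarrow$(ii), exactly where you flag ``the main difficulty,'' and it rests on a false premise: $j_{\sigma*}$ does \emph{not} destroy flatness. Since $X$ is semi-separated, for every affine open $V\subseteq X$ the intersection $U_\sigma\cap V$ is affine and $(j_{\sigma*}\CM)(V)=M\otimes_{R_\sigma}\CO_X(U_\sigma\cap V)$, a flat module over $\CO_X(U_\sigma\cap V)$, which in turn is a flat $\CO_X(V)$-algebra; hence $j_{\sigma*}\CM$ is a flat $\CO_X$-module whenever $M$ is flat. This fact is the engine of the paper's Theorem \ref{109}: push the minimal cotorsion resolution of $F$ (a pure acyclic complex of flat $A_1$-modules) forward along the exact functor $f_*$, which preserves flatness as above and preserves cotorsion by adjunction (because $f^*$ is exact and preserves flats); hypothesis (i) applied to the flat sheaf $f_*\widetilde F$ makes the cosyzygy $\im f_*\widetilde{\delta^{n-1}}$ cotorsion, so the tail splits, and restricting the splitting back to $U_\sigma$ gives $\cd F\le n$. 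Your proposed workaround --- a flat cover of $j_{\sigma*}\CM$ plus unspecified $\Ext$ bookkeeping --- is left unexecuted and does not obviously close: even if you use Wakamatsu's lemma (the kernel of a flat cover is cotorsion) to bound $\cd_X(j_{\sigma*}\CM)$ by the cotorsion dimension of the cover, you would still need every flat $R_\sigma$-module $F$ to arise as the restriction of some flat $\CO_X$-module in order to have test objects for $\Ext_{R_\sigma}(F,M)$ via your adjunction isomorphism, and without the flatness of $j_{\sigma*}$ you have no such supply (restriction does not preserve cotorsion, so restricting the cover sequence does not produce one). Once $j_{\sigma*}$ is known to preserve flats the whole reduction collapses to one line: $\Ext^k_{R_\sigma}(F,M)\cong\Ext^k_X(j_{\sigma*}\widetilde F,j_{\sigma*}\widetilde M)$ for all flat $F$, whence $\cd_{R_\sigma}M\le\cd_X(j_{\sigma*}\widetilde M)<\infty$ by (i). So the proposal is incomplete at its crux, though the missing step is a standard fact rather than a dead end.
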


In the remainder of this paper we show that every flat
$\CO_X$-module has finite pure injective dimension. As an
application, we prove that if $X$ is coherent $n$-perfect then there
is an equivalence $\KPIFX\lrt\KPFX$ of homotopy categories, whenever
$\KFX$ be the homotopy category of  complexes of flat
$\CO_X$-modules and $\KPIFX$ be the essential image of the homotopy
category of complexes of pure injective flat $\CO_X$-modules in
$\KFX$ in the sense of \cite{HS}.

\vspace{1cm}

\textbf{Setup}. In this paper,
$\mathfrak{U}=\{\mathrm{Spec}A_i\}_{i=1}^m$ denotes a
semi-separating cover of $X$(i.e.  each intersection of elements of
$\mathfrak{U}$ is also affine).
\section{Cotorsion envelope of bounded complexes}
Let $\CBX$ be the category of bounded complexes of $\CO_X$-modules,
$\CBAFX$ be the category of bounded acyclic complexes of flat
$\CO_X$-modules and $\CBCOTX$ be the category of bounded complexes
of cotorsion $\CO_X$-modules. In this section, we prove that the
pair $(\CBAFX,\CBCOTX)$ is a complete cotorsion theory in $\CBX$,
i.e. $\CBAFX= {}^\perp\CBCOTX$, $\CBAFX^\perp=\CBCOTX$  and it has
enough projectives. For notations and definitions see \cite{HS} and \cite{EJ}.
\begin{lemma}\label{107}
Let $\xymatrix@C-0.9pc{\mathbf{G}:
0\ar[r]&\CG'\ar[r]&\CG\ar[r]&\CG''\ar[r]&0}$ be an exact sequence of
$\CO_X$-modules. Then there exists a morphism $\phi:\F\lrt
\mathbf{G}$ of complexes whenever  $\F$ is a short exact  sequence
of flat $\CO_X$-modules.
\end{lemma}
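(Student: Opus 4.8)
I read the statement as asserting more than the vacuous existence of a chain map: the useful content, and what feeds the ``enough projectives'' half of the cotorsion theory $(\CBAFX,\CBCOTX)$, is that there is a short exact sequence of flats $\F\colon 0\rt\CF'\rt\CF\rt\CF''\rt0$ together with a morphism $\phi\colon\F\lrt\mathbf{G}$ that is an epimorphism of complexes (degreewise surjective). Writing the bottom sequence as $0\rt\CG'\st{a}{\rt}\CG\st{b}{\rt}\CG''\rt0$, the plan is to build $\phi$ column by column using that the complete cotorsion theory $(\mathrm{FlatX},\mathrm{CotX})$ of \cite[4.2]{EE} supplies, for every $\CO_X$-module $M$, a flat epimorphism onto $M$ (the flat term of a special precover). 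The naive horseshoe—flat-covering $\CG'$ and $\CG''$ and taking $\CF=\CF'\oplus\CF''$—fails here, since it would require lifting the cover $\CF''\twoheadrightarrow\CG''$ through $b$, i.e.\ projectivity of $\CF''$, and $\mathfrak{Qco}X$ has no nonzero projectives. Removing this lifting obstruction is the main point.

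The device is a pullback. First choose a flat epimorphism $c\colon\CF''\twoheadrightarrow\CG''$ and form $\CQ=\CG\times_{\CG''}\CF''$. The projections give epimorphisms $g\colon\CQ\twoheadrightarrow\CG$ and $\rho\colon\CQ\twoheadrightarrow\CF''$ with $b\circ g=c\circ\rho$, and $\rho$ has kernel $\CG'$; the essential gain is that $\rho$ is exactly the lift we could not produce directly. Next choose a flat epimorphism $h\colon\CF_0\twoheadrightarrow\CQ$ with $\CF_0$ flat, so $\rho h\colon\CF_0\twoheadrightarrow\CF''$ is an epimorphism of flats; set $\CF'_0=\Ker(\rho h)$. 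Here I would invoke the standard fact that the kernel of an epimorphism $\CF_0\twoheadrightarrow\CF''$ between flats is again flat—checked stalkwise, in accordance with the definition of flatness adopted in the introduction, from the long exact $\Tor$ sequence, since $\Tor_1(\CF_0,-)=0$ and $\Tor_2(\CF'',-)=0$ force $\Tor_1(\CF'_0,-)=0$. Thus $0\rt\CF'_0\rt\CF_0\st{\rho h}{\rt}\CF''\rt0$ is a short exact sequence of flats, and $gh\colon\CF_0\rt\CG$ together with $c\colon\CF''\rt\CG''$ already defines a chain map that is surjective in the $\CG$- and $\CG''$-columns.

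This map need not be surjective in the $\CG'$-column, so I would correct it by grafting on a flat cover of the subobject: choose a flat epimorphism $d\colon\CP\twoheadrightarrow\CG'$ and replace the top row by the still-exact, still-flat sequence $0\rt\CF'_0\oplus\CP\rt\CF_0\oplus\CP\st{(\rho h,\,0)}{\rt}\CF''\rt0$, with the new summand $\CP$ mapped by $d$ into $\CG'\subseteq\CG$ in the left and middle columns and to $0$ in the right. Setting $\CF'=\CF'_0\oplus\CP$ and $\CF=\CF_0\oplus\CP$, all three vertical maps are now epimorphisms; the right square commutes by the pullback identity $bg=c\rho$, and the left square commutes because $gh$ sends $\Ker(\rho h)$ into $\Ker b=\CG'$ while $\CP$ maps by $d$. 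Hence $\phi\colon\F\lrt\mathbf{G}$ is the desired degreewise-surjective morphism out of a short exact sequence of flats. The only genuine obstacle is the lifting step of the first paragraph, and the pullback construction, together with the ``kernel of an epimorphism of flats is flat'' lemma, is precisely what dissolves it; the remaining verifications are a routine diagram chase (the surjectivity on the $\CG'$-column also follows from the snake lemma once the other two columns are epi and the extra $\CP$-summand is present).
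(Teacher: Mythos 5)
Your proposal is correct and follows essentially the same route as the paper: a flat cover $\CF''\twoheadrightarrow\CG''$, the pullback $\CQ=\CG\times_{\CG''}\CF''$, a flat cover of $\CQ$, and flatness of the kernel of the resulting epimorphism of flats --- the paper's final ``pullback of $i$ and $p$'' is exactly your $\CF'_0=\ker(\rho h)=h^{-1}(\ker\rho)$. The one (harmless) deviation is your extra summand $\CP$: since $h$ is an epimorphism, $\CF'_0=h^{-1}(\ker\rho)$ already maps onto $\ker\rho\cong\CG'$, so the left column is automatically surjective and the grafting step is unnecessary.
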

\begin{proof}
Let
$\xymatrix@C-0.7pc@R-0.9pc{0\ar[r]&\CC''\ar[r]&\CF''\ar[r]&\CG''\ar[r]&0}$
be the flat cover of $\CG''$. Consider the  pullback diagram
\[\xymatrix@C-0.7pc@R-0.9pc{&&0\ar[d]&0\ar[d]\\&&\CC''\ar[d]\ar@{=}[r]&\CC''\ar[d]\\0\ar[r]&\CG'
\ar[r]\ar@{=}[d]&\CP\ar[r]\ar[d]&\CF''\ar[r]\ar[d]&0\\
0\ar[r]&\CG'\ar[r]^i&\CG\ar[r]\ar[d]&\CG''\ar[r]\ar[d]&0,\\
&&0&0}\] and let
$\xymatrix@C-0.7pc@R-0.9pc{0\ar[r]&\CC'\ar[r]&\CF\ar[r]^p&\CP\ar[r]&0}$
be the flat cover  of $\CP$.  Then the  pullback of $i$ and $p$
 completes the proof.
\end{proof}
Recall that,  a bounded complex $\F$ is called \emph{flat} if
$\F\in\CBAFX$ and  a bounded complex $\mathbf{C}$ is called
\emph{cotorsion} if  $\C\in\CBAFX^{\perp}$, where the orthogonal is
taken in the exact category
$\mathbf{C}^{\mathrm{b}}(\mathfrak{Qco}X)$. By similar argument that
used in \cite[Proposition 2.1]{HS}, we deduce the following
proposition.
\begin{proposition}\label{106}
Let $\C$ be a bounded complex. Then  $\C$ is cotorsion  if and only
if it is a complex of cotorsion $\CO_X$-modules.
\end{proposition}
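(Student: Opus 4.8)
The plan is to prove both implications by testing $\C$ against carefully chosen members of $\CBAFX$ and by analysing the termwise behaviour of extensions in the abelian category $\CBX$ of bounded complexes.

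For the necessity of the condition, suppose $\C$ is cotorsion, i.e. $\Ext^1_{\CBX}(\F,\C)=0$ for every $\F\in\CBAFX$. For a flat $\CO_X$-module $\CF$ and an integer $n$, I would test against the disk complex $D^{n}(\CF)$, namely the complex having $\CF$ in degrees $n-1$ and $n$ joined by the identity morphism and zero elsewhere. It is bounded, acyclic and has flat terms, so $D^{n}(\CF)\in\CBAFX$. The functor $D^{n}$ is exact and is left adjoint to the exact evaluation functor $\CY\mapsto\CY^{n-1}$, giving $\Hom_{\CBX}(D^{n}(\CF),\CY)\cong\Hom_X(\CF,\CY^{n-1})$. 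For an exact left adjoint of an exact functor, pulling back and pushing out extensions yields a natural isomorphism $\Ext^1_{\CBX}(D^{n}(\CF),\C)\cong\Ext^1_X(\CF,\C^{n-1})$; since this is a Yoneda-Ext computation it requires no projectives in $\mathfrak{Qco}X$. As the left-hand side vanishes, so does $\Ext^1_X(\CF,\C^{n-1})$, and letting $\CF$ range over all flat $\CO_X$-modules and $n$ over all integers shows that every term $\C^{m}$ is cotorsion.

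For sufficiency, assume every $\C^{m}$ is cotorsion, fix $\F\in\CBAFX$, and consider an arbitrary extension $0\to\C\to\E\to\F\to0$ in $\CBX$. In each degree the sequence $0\to\C^{m}\to\E^{m}\to\F^{m}\to0$ is an extension of the flat module $\F^{m}$ by the cotorsion module $\C^{m}$, so $\Ext^1_X(\F^{m},\C^{m})=0$ forces it to split; hence the extension is degreewise split. Degreewise-split extensions of $\F$ by $\C$ are classified up to isomorphism by $\Hom_{\K(\mathfrak{Qco}X)}(\F,\C[1])$, with the split extension corresponding to the zero class, so it remains to verify that every chain map $g\colon\F\to\C[1]$ is null-homotopic.

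The construction of this homotopy is the real content and the main obstacle. First I would record that the cycles $Z^{m}=\Ker(d^{m})=\im(d^{m-1})$ of $\F$ are flat: since flatness and $\Tor$ may be checked on stalks, the short exact sequences $0\to Z^{m}\to\F^{m}\to Z^{m+1}\to0$ together with a descending induction (starting from $Z^{m}=0$ for $m\gg0$, where the quotient $Z^{m+1}$ is flat by the inductive hypothesis) show each $Z^{m}$ is flat; here both boundedness and acyclicity of $\F$ are essential. Given $g$, I would then build a contracting homotopy by induction on the degree: at each stage one must extend a morphism defined on the flat cycle $Z^{m}$ across a surjection onto $\C^{m}$, and the obstruction lies in a group of the form $\Ext^1_X(\text{flat},\C^{m})$, which vanishes because $\C^{m}$ is cotorsion. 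Boundedness of $\F$ both initiates and terminates the induction, so $g$ is null-homotopic and the extension splits. Everything outside this homotopy argument is formal, so the passage from flat cycles and termwise cotorsion vanishing to an actual contracting homotopy is where the effort concentrates.
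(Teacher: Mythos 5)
Your proposal is correct and follows essentially the same route as the paper, whose ``proof'' is just a pointer to the argument of \cite[Proposition 2.1]{HS}: there, as in your write-up, necessity is obtained by testing against disk complexes $D^n(\CF)$ via the exact adjunction $\Hom_{\CBX}(D^n(\CF),\Y)\cong\Hom_X(\CF,\Y^{n-1})$, and sufficiency by observing that any extension by a bounded acyclic complex of flat $\CO_X$-modules is degreewise split (termwise flat--cotorsion orthogonality), hence classified by a chain map to $\C[1]$, which is null-homotopic because the cycles $Z^m$ are flat and the obstructions to extending the homotopy lie in groups $\Ext^1_X(Z^{m+1},\C^{m})=0$. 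You have in effect supplied the details the paper leaves to the citation, including the correct observation that boundedness plus acyclicity is what makes the cycles flat and makes the inductive homotopy construction start and stop.
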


\begin{theorem}\label{211}
Let $\X$ be a bounded complex of $\CO_X$-modules. Then there exists
an exact sequence
$\xymatrix@C-0.7pc@R-0.9pc{0\ar[r]&\C\ar[r]&\F\ar[r]&\X\ar[r]&0}$ of
complexes, where $\F$ is flat and $\C$ is cotorsion.
\end{theorem}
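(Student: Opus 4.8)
The plan is to read the desired sequence as a special flat precover for the would-be cotorsion pair $(\CBAFX,\CBCOTX)$ in $\CBX$, and to construct it by induction on the length of the bounded complex $\X$ (say $\X$ is concentrated in degrees $[a,b]$), feeding on the module-level completeness of $(\mathrm{FlatX},\mathrm{CotX})$ from \cite{EE} together with Lemma \ref{107} and Proposition \ref{106}. The inductive skeleton is the brutal-truncation short exact sequence of complexes
\[0\rt \X^b[-b]\rt\X\rt\sigma^{<b}\X\rt0,\]
whose subcomplex is the stalk of the top module and whose quotient is strictly shorter. Thus the induction splits into two tasks: a base case for a single $\CO_X$-module placed in one degree, and an extension (horseshoe) step.

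For the horseshoe step, suppose precover sequences $0\rt\C'\rt\F'\rt\X'\rt0$ and $0\rt\C''\rt\F''\rt\X''\rt0$ are given for the sub $\X'$ and quotient $\X''$ of a short exact sequence $0\rt\X'\rt\X\rt\X''\rt0$ in $\CBX$. Since $\F''\in\CBAFX$ and $\C'\in\CBCOTX=\CBAFX^{\perp}$, we have $\Ext^1_{\CBX}(\F'',\C')=0$, so the classical horseshoe argument yields a middle row $0\rt\C\rt\F\rt\X\rt0$ with $\F$ an extension $0\rt\F'\rt\F\rt\F''\rt0$ and $\C$ an extension $0\rt\C'\rt\C\rt\C''\rt0$. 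It then remains to see that $\CBAFX$ and $\CBCOTX$ are closed under extensions: for $\CBAFX$ this is immediate from the long exact cohomology sequence (an extension of acyclic complexes is acyclic) together with closure of $\mathrm{FlatX}$ under extensions, and for $\CBCOTX$ it follows from Proposition \ref{106} and closure of $\mathrm{CotX}$ under extensions. Hence $\F$ is flat and $\C$ is cotorsion, and the step is complete.

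The base case — a single $\CO_X$-module $M$ in one degree — is where the genuine content sits, and it is exactly the configuration Lemma \ref{107} is built for. First I would take the special flat precover $0\rt\CK\rt\CF_0\rt M\rt0$ furnished by completeness of $(\mathrm{FlatX},\mathrm{CotX})$, so $\CF_0$ is flat and $\CK$ is cotorsion. The subtlety is that the covering complex $\F$ must be \emph{acyclic}, so a single flat term cannot suffice: I must cap $\CF_0$ by further flat modules to manufacture a bounded acyclic complex of flats surjecting onto $M$, while arranging that every term outside the distinguished degree, as well as the kernel of $\CF_0\rt M$, is cotorsion. This is precisely what Lemma \ref{107} supplies: applied to the relevant defining short exact sequences, it produces short exact sequences of \emph{flat} $\CO_X$-modules — equivalently, three-term acyclic flat complexes — together with comparison maps, and splicing these along their cotorsion kernels assembles the required bounded acyclic flat $\F$. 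Proposition \ref{106} then certifies that the kernel complex $\C$, being degreewise cotorsion, is cotorsion.

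The hard part will be this base case: securing simultaneously that $\F$ is acyclic (not merely a flat precover "of the same shape" as $M$) and that $\C$ is a complex of cotorsion sheaves. A naive finite flat resolution of $M$ fails to be acyclic at its left end, so the construction must be terminated by flat modules that are themselves cotorsion; controlling these caps — via the pullback construction of Lemma \ref{107} over the semi-separating cover $\mathfrak{U}$ and the completeness of the cotorsion theory on each $\Spec A_i$ — is the crux. Once the base case is in hand, the brutal-truncation induction and the horseshoe step assemble a general bounded $\X$ routinely, exactly as in \cite[Proposition 2.1]{HS}.
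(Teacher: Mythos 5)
Your scaffolding (brutal truncation plus a horseshoe step) is structurally sound, but the base case — which you correctly identify as the crux — is not actually solved by the tools you invoke, and this is a genuine gap. Lemma \ref{107}, and any splicing procedure built from it, only covers \emph{exact} targets: it takes a short exact sequence of $\CO_X$-modules and produces an acyclic flat complex mapping onto it with degreewise cotorsion kernel, so iterating it can only ever cover acyclic complexes. A stalk complex $M[0]$ is not acyclic, so the lemma does not apply to it, and your ``capping'' runs into a quantitative obstruction: in a sequence $0\rt\C\rt\F\rt M[0]\rt 0$ one has $\C^i=\F^i$ for every $i\neq 0$, so each term of $\F$ away from degree $0$ must be simultaneously flat and cotorsion. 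If, as you propose, you start from the special flat precover $\CF_0\rt M$ and cap on the right, acyclicity plus boundedness of $\F$ forces a \emph{finite} exact coresolution of the flat sheaf $\CF_0$ by flat cotorsion sheaves, i.e. finite cotorsion dimension of $\CF_0$ — precisely the $n$-perfectness that Theorem 1.1 is trying to characterize and that Theorem \ref{211} does not assume; invoking it here would be circular. Completeness of $(\mathrm{FlatX},\mathrm{CotX})$ on each $\mathrm{Spec}\,A_i$ gives covers and envelopes but no termination after finitely many steps, so nothing in your toolkit closes the cap.

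The paper's proof supplies exactly the device your sketch is missing for non-acyclic $\X$: by \cite{Sp} choose a quasi-isomorphism $f:\X[-1]\lrt\I$ with $\I$ a complex of injectives, so that $\mathrm{cone}(f)$ \emph{is} acyclic and hence a legitimate target for the Lemma \ref{107} machinery, giving $0\rt\C'\rt\F\rt\mathrm{cone}(f)\rt 0$ with $\F$ acyclic flat and $\C'$ cotorsion; then pull back along the degreewise split inclusion $\I\rt\mathrm{cone}(f)$, whose cokernel is $\X$. The kernel of the resulting surjection $\F\rt\X$ is an extension of $\I$ by $\C'$, and it stays cotorsion because injective $\CO_X$-modules are cotorsion (Proposition \ref{106}); the boundedness problem is absorbed into the choice of $\I$. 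Your truncation-plus-horseshoe plan has no substitute for this cone-and-pullback maneuver. A secondary, repairable point: your horseshoe step needs more than $\Ext^1(\F'',\C')=0$ — lifting the relevant extension class along $\F'\rt\X'$ also uses $\Ext^2(\F'',\C')=0$, i.e. heredity of the pair, or else the pullback refinement exactly as in the proof of Lemma \ref{107}; this does hold here, since kernels of degreewise epimorphisms between objects of $\CBAFX$ remain in $\CBAFX$, but it must be argued rather than attributed to the classical horseshoe lemma.
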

\begin{proof}
By \cite{Sp}, there exists  a quasi-isomorphism $f: \X[-1]\lrt\I$,
with $\I$ is a bounded complex  of injective $\CO_X$-modules. By
Lemma \ref{107}, we construct the short  exact sequence
$\xymatrix@C-0.7pc@R-0.9pc{0\ar[r]&\C'\ar[r]&\F\ar[r]&\mathrm{cone}(f)\ar[r]&0},$
with  $\F$ is flat and  $\C'$ is cotorsion complex. Then the
pullback of the morphisms
$\xymatrix@C-0.7pc{\F\ar[r]&\mathrm{cone}(f)}$ and
$\xymatrix@C-0.7pc{\I\ar[r]&\mathrm{cone}(f)}$ completes the proof.
\end{proof}
\begin{theorem}\label{108}
The pair $(\CBAFX,\CBCOTX)$ is a complete cotorsion theory in
$\CBX$.
\end{theorem}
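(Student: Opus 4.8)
The plan is to verify the two defining identities of a cotorsion pair together with the completeness condition, using Proposition \ref{106} and Theorem \ref{211} as the principal inputs. By Proposition \ref{106} we already have $\CBAFX^\perp=\CBCOTX$, so the outstanding tasks are to prove $\CBAFX={}^\perp\CBCOTX$ and to produce both approximation sequences. The inclusion $\CBAFX\subseteq{}^\perp\CBCOTX$ is immediate from $\CBCOTX=\CBAFX^\perp$. For the reverse inclusion I would take $\X\in{}^\perp\CBCOTX$ and apply Theorem \ref{211} to obtain a short exact sequence $\xymatrix@C-0.7pc{0\ar[r]&\C\ar[r]&\F\ar[r]&\X\ar[r]&0}$ with $\F\in\CBAFX$ and $\C\in\CBCOTX$. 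Since $\X\in{}^\perp\CBCOTX$ and $\C\in\CBCOTX$, we get $\Ext^1(\X,\C)=0$, so the sequence splits and $\X$ is a direct summand of $\F$. As a direct summand of a bounded acyclic complex with flat terms is again acyclic (homology is additive) with flat terms (flats are closed under summands), the class $\CBAFX$ is closed under direct summands, whence $\X\in\CBAFX$.

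For completeness, Theorem \ref{211} already provides, for every bounded complex $\X$, a special $\CBAFX$-precover $\xymatrix@C-0.7pc{0\ar[r]&\C\ar[r]&\F\ar[r]&\X\ar[r]&0}$. To obtain the dual special $\CBCOTX$-preenvelope I would invoke Salce's lemma in the abelian category $\CBX=\mathbf{C}^{\rm{b}}(\mathfrak{Qco}X)$, as in \cite{EJ}. Three ingredients are needed: first, $\CBX$ has enough injectives, since $\mathfrak{Qco}X$ is a Grothendieck category and every bounded complex embeds into a finite direct sum of disk complexes on injective objects of $\mathfrak{Qco}X$, which is again a bounded complex; second, every injective object of $\CBX$ lies in $\CBAFX^\perp=\CBCOTX$, because injectives are right-orthogonal to everything; third, $\CBCOTX=\CBAFX^\perp$ is closed under extensions, as right-orthogonal classes always are. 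Granting these, for a given $\X$ one embeds $\X\hookrightarrow\E$ with $\E$ injective, forms the cokernel $\mathbf{L}$, takes a special precover $\xymatrix@C-0.7pc{0\ar[r]&\C\ar[r]&\F\ar[r]&\mathbf{L}\ar[r]&0}$ of $\mathbf{L}$ from Theorem \ref{211}, and pulls back along $\xymatrix@C-0.7pc{\E\ar[r]&\mathbf{L}}$ and $\xymatrix@C-0.7pc{\F\ar[r]&\mathbf{L}}$. The resulting complex $\mathbf{P}$ fits into $\xymatrix@C-0.7pc{0\ar[r]&\C\ar[r]&\mathbf{P}\ar[r]&\E\ar[r]&0}$, so $\mathbf{P}$ is an extension of a cotorsion complex by a cotorsion complex and is therefore cotorsion, while the companion sequence $\xymatrix@C-0.7pc{0\ar[r]&\X\ar[r]&\mathbf{P}\ar[r]&\F\ar[r]&0}$ is the sought special $\CBCOTX$-preenvelope.

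The step I expect to be the main obstacle is the careful setup of the exact-categorical hypotheses of Salce's lemma, in particular confirming that $\CBX$ has enough injectives in the ambient exact structure and that these injectives are cotorsion: the injective objects of the category of bounded complexes are disk complexes rather than termwise-injective complexes, and one must check that the disk-complex embedding preserves boundedness and that the orthogonality $\Ext^1(\F,\E)=0$ for $\F\in\CBAFX$ genuinely holds. Once this input is secured, the pullback argument is routine, and combining it with the identities $\CBAFX={}^\perp\CBCOTX$ and $\CBAFX^\perp=\CBCOTX$ shows that $(\CBAFX,\CBCOTX)$ is a complete cotorsion theory in $\CBX$.
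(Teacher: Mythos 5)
Your proposal is correct and takes essentially the same route as the paper: the key identity $\CBAFX={}^\perp\CBCOTX$ is established exactly as in the paper's own proof, by applying Theorem \ref{211} to $\X\in{}^\perp\CBCOTX$ and splitting the resulting short exact sequence (with Proposition \ref{106} supplying $\CBAFX^\perp=\CBCOTX$). The only difference is explicitness: where the paper simply declares the pair ``complete by Theorem \ref{211}'' (which directly yields only the special precovers), you spell out the standard Salce's-lemma construction of special preenvelopes via the finite disk-complex embedding and a pullback, which is precisely the routine argument the paper leaves implicit.
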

\begin{proof}
It suffices to show that $\CBAFX={}^\perp\CBCOTX$.  Let
$\X\in{}^\perp\CBCOTX$. By Theorem \ref{211}, there exist an  exact
sequence
$\xymatrix@C-0.7pc@R-0.9pc{0\ar[r]&\C\ar[r]&\F\ar[r]&\X\ar[r]&0},$
of complexes, with $\F$ is flat and  $\C$ is cotorsion. By
assumption this is  split exact sequence. Then $\X\in\CBAFX$.
Therefore $(\CBAFX,\CBCOTX)$ is a cotorsion theory which is complete
by Theorem \ref{211}.
\end{proof}
\begin{corollary}
Every bounded complex of $\CO_X$-modules admits flat cover and
cotorsion envelope.
\end{corollary}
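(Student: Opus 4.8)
The plan is to read the Corollary off Theorem~\ref{108} together with the standard passage from \emph{precovers} to \emph{covers}. First I would recall what completeness buys us: since $(\CBAFX,\CBCOTX)$ is a complete cotorsion theory, every bounded complex $\X$ sits in two short exact sequences, a special $\CBAFX$-precover $0\lrt\C\lrt\F\lrt\X\lrt 0$ with $\F\in\CBAFX$ and $\C\in\CBCOTX$ (this is precisely the sequence produced in Theorem~\ref{211}), and dually a special $\CBCOTX$-preenvelope $0\lrt\X\lrt\D\lrt\E\lrt 0$ with $\D\in\CBCOTX$ and $\E\in\CBAFX$. Thus the existence of flat precovers and cotorsion preenvelopes is immediate, and the real content of the Corollary is the upgrade to the \emph{minimal} such approximations.

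For this upgrade I would invoke the general principle (see \cite{EJ}) that a complete cotorsion theory whose left-hand class is closed under direct limits is perfect, and therefore supplies genuine covers from the left class and genuine envelopes from the right class. Accordingly, the key step is to verify that $\CBAFX$ is closed under direct limits. Here I would use that $\mathfrak{Qco}X$ is a Grothendieck category, so that direct limits in it are exact; consequently the class of flat $\CO_X$-modules is closed under direct limits and a directed colimit of acyclic complexes is again acyclic. For a directed system whose terms are concentrated in a fixed finite range of degrees the colimit stays in that range, so the limit remains a bounded acyclic complex of flat sheaves, i.e. an object of $\CBAFX$. Once this closure is in hand, the cited perfectness criterion yields both the flat cover and the cotorsion envelope of every bounded complex.

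The step I expect to be the main obstacle is exactly this boundedness bookkeeping: unlike $\mathfrak{Qco}X$ itself, the category $\CBX$ is \emph{not} closed under arbitrary direct limits, since a system whose terms occupy ever-growing degree ranges has an unbounded colimit, so one cannot apply the module-theoretic theorem verbatim. I would circumvent this by noting that in Enochs' construction of the cover of a \emph{fixed} complex $\X$ all complexes that arise are supported in the (finite) degree range of $\X$; the colimits entering the argument are therefore taken over uniformly bounded systems and stay inside $\CBX$. With closure under direct limits secured in this restricted but sufficient sense, the perfectness criterion applies and delivers the desired flat cover and cotorsion envelope.
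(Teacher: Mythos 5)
Your proposal is correct and is essentially the argument the paper leaves implicit: the Corollary is stated there without proof as an immediate consequence of Theorem \ref{108}, namely completeness of $(\CBAFX,\CBCOTX)$ gives special flat precovers and cotorsion preenvelopes, which are upgraded to genuine covers and envelopes by the standard Enochs perfectness criterion from \cite{EJ} (closure of the left class under direct limits). Your boundedness bookkeeping — observing that $\CBX$ is not closed under arbitrary direct limits and confining the construction to a fixed finite degree window — addresses a real subtlety the paper glosses over entirely; the only small imprecision is that the relevant window is not the degree range of $\X$ itself but that of the fixed special precover from Theorem \ref{211} (which may exceed the support of $\X$), though since this window is still finite and can be held fixed throughout the transfinite construction (one may reuse the same special precover at every stage), your argument goes through as intended.
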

\section{cotorsion dimension of flat $\CO_X$-module}
In this section we prove that if every flat $\CO_X$-module has
finite cotorsion dimension then $X$ is  $n$-perfect for some $n\geq
0$.
\begin{definition}
Let $n\geq 0$ be an integer. $X$ is called $n$-perfect  if
$n=\mathrm{sup}\{\mathrm{cd}\CF| \CF\in\mathrm{Flat}X\}$.

\end{definition}

\begin{theorem}\label{nper}
Let for each $1\leq i\leq m$, every flat $A_i$-module has finite
cotorsion dimension.  Then $X$ is $n$-perfect for some $n$.
\end{theorem}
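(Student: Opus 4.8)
The plan is to produce a single integer $n$ that bounds $\cd_X\CF$ for \emph{every} flat $\CO_X$-module $\CF$, by combining a uniform bound on each affine chart with the \v{C}ech resolution attached to the cover $\mathfrak{U}$. The first ingredient is an affine uniform bound: over a fixed ring $A$, if every flat $A$-module has finite cotorsion dimension, then $\sup\{\cd M : M \text{ flat}\}<\infty$. This is easy once one notes that the flat cotorsion pair is hereditary (a kernel of an epimorphism between flats is flat, by dimension-shifting $\Tor$), so $\cd M\le n$ is equivalent to $\Ext^{n+1}_A(G,M)=0$ for all flat $G$, and a direct summand has cotorsion dimension no larger than the whole. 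Thus if the supremum were infinite one could pick flats $F_k$ with $\cd F_k\ge k$ and form the still-flat module $\bigoplus_k F_k$, which would have infinite cotorsion dimension, a contradiction. Applied on each $\Spec A_i$ this yields integers $n_i$ with $\cd_{A_i}M\le n_i$ for every flat $A_i$-module $M$, and I set $N=\max_i n_i$.

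Next I would push the bound down to the intersections. For $\sigma\subseteq\{1,\dots,m\}$ the intersection $U_\sigma=\bigcap_{i\in\sigma}U_i$ is affine, say $U_\sigma=\Spec A_\sigma$, and for any $i\in\sigma$ the open immersion $U_\sigma\hookrightarrow U_i$ makes $A_i\to A_\sigma$ a flat epimorphism. For such a map, restriction of scalars is exact and fully faithful and preserves injectives (its left adjoint $-\otimes_{A_i}A_\sigma$ is exact), whence $\Ext^k_{A_\sigma}(G,M)\cong\Ext^k_{A_i}(G,M)$ for all $A_\sigma$-modules $G,M$. Since a flat $A_\sigma$-module is flat over $A_i$, the $\Ext$-characterization of $\cd$ gives $\cd_{A_\sigma}M\le n_i\le N$ for every flat $A_\sigma$-module $M$; so the single bound $N$ governs every chart and every intersection.

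The globalization is then a pushforward-plus-\v{C}ech argument. Each $j_\sigma\colon U_\sigma\hookrightarrow X$ is affine, so $(j_\sigma)_*$ is exact and preserves quasi-coherence, while $j_\sigma^*$ is exact and carries flat sheaves to flat $A_\sigma$-modules; consequently $\Ext^k_X(\CF,(j_\sigma)_*M)\cong\Ext^k_{A_\sigma}(\CF|_{U_\sigma},M)$, so $(j_\sigma)_*$ sends cotorsion modules to cotorsion sheaves and, being exact, takes a cotorsion coresolution to one of the same length, giving $\cd_X((j_\sigma)_*M)\le\cd_{A_\sigma}M\le N$. For flat $\CF$ the \v{C}ech complex of $\mathfrak{U}$ is an exact sequence $0\to\CF\to\CC^0\to\cdots\to\CC^{m-1}\to0$ with $\CC^p=\bigoplus_{|\sigma|=p+1}(j_\sigma)_*(\CF|_{U_\sigma})$, and each $\CF|_{U_\sigma}$ is flat over $A_\sigma$, so $\cd_X\CC^p\le N$. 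As the flat cotorsion pair on $\mathfrak{Qco}X$ is also hereditary, a standard dimension-shift along this length-$(m-1)$ coresolution yields $\cd_X\CF\le N+m-1$, a bound independent of $\CF$; hence $X$ is $n$-perfect with $n\le N+m-1$.

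I expect the main obstacle to be the descent of the bound to the intersections $A_\sigma$, since there one must compare cotorsion over two different rings: the $\Ext$-comparison for flat ring epimorphisms is the technical heart of the argument, whereas the affine uniform bound and the \v{C}ech dimension-shift are routine once the hereditary property of the flat cotorsion pair is in hand.
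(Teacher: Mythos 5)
Your proposal is correct, but it takes a genuinely different route from the paper. The paper's proof works at the level of complexes: it takes the \v{C}ech resolution $\mathbf{G}$ of $\CF$ as a single bounded complex, invokes the complete cotorsion theory $(\CBAFX,\CBCOTX)$ on $\CBX$ built in Section 2 (Theorem \ref{108}) to coresolve $\mathbf{G}$ by cotorsion complexes, and then splices the degree-zero row of that coresolution with the tail of the last complex to read off a finite cotorsion resolution of $\CF$; the dimension-shifting is thus packaged into the complex-level machinery rather than done by hand. You instead argue module-by-module: a uniform affine bound via the direct-sum trick (if $\cd F_k\geq k$ then $\bigoplus_k F_k$ is flat of infinite cotorsion dimension), descent of that bound to the intersection rings $A_\sigma$ via the $\Ext$-comparison for the flat ring epimorphisms $A_i\to A_\sigma$, preservation of cotorsion dimension under the exact pushforwards $(j_\sigma)_*$, and an explicit dimension shift along the length-$(m-1)$ \v{C}ech coresolution using hereditariness of the flat cotorsion pair. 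Each approach buys something: the paper's is shorter given its Section 2 apparatus, while yours is more elementary and self-contained, yields the explicit bound $n\leq N+m-1$, and in fact makes precise two points the paper leaves implicit --- that the hypothesis ``finite cotorsion dimension for every flat $A_i$-module'' upgrades to a uniform bound $n_i$ (which the published argument simply asserts when it says each $A_i$ is $n_i$-perfect, and which is needed for $X$ to be $n$-perfect, a supremum condition), and that the \v{C}ech terms $\mathfrak{C}^p(\mathfrak{U},\CF)$ have finite cotorsion dimension even though they are pushforwards from the intersections $U_\sigma$, whose coordinate rings are not among the $A_i$; your flat-epimorphism step is exactly what justifies the paper's sentence ``by assumption, each $\mathfrak{C}^i(\mathfrak{U},\CF)$ has finite cotorsion dimension.''
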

\begin{proof}
Let $\CF$ be a flat $\CO_X$-module and
\[\xymatrix@C-0.7pc@R-0.9pc{\mathbf{G}: 0\ar[r]&\CF\ar[r]&\mathfrak{C}^0(\mathfrak{U},\CF)\ar[r]
&\mathfrak{C}^1(\mathfrak{U},
\CF)\ar[r]&\cdots\ar[r]&\mathfrak{C}^{m-2}(\mathfrak{U},\CF)\ar[r]&\mathfrak{C}^{m-1}(\mathfrak{U},\CF)\ar[r]&
0}\] be its $\check{\mathrm{C}}$heck resolution. By assumption, for
each $0\leq i\leq m-1$, $\mathfrak{C}^i(\mathfrak{U},\CF)$ has
finite cotorsion dimension, then by Theorem \ref{108} there exist a
resolution
$$\xymatrix@C-0.7pc@R-0.9pc{
0\ar[r]&\mathbf{G}\ar[r]&\mathbf{C}_0\ar[r]&\mathbf{C}_1\ar[r]&\cdots\ar[r]&\mathbf{C}_{n-1}\ar[r]&\mathbf{C}_n\ar[r]&
0}$$  of $\mathbf{G}$ by  flat complexes of $\CO_X$-modules, where
$\mathbf{C}_0$, $\mathbf{C}_1$, ..., $\mathbf{C}_{n-1}$ are
cotorsion complexes and $\mathbf{C}_n$ is a flat complex  such that
for each $i> 0$, $\mathbf{C}_n^i$ is cotorsion. Then the flat
complex
\[\xymatrix@C-0.7pc@R-0.9pc{ 0\ar[r]&\CF\ar[r]&\mathbf{C}_0^0\ar[r]&\mathbf{C}_1^0\ar[r]&\cdots\ar[r]&\mathbf{C}_{n-1}^0\ar[r]&
\mathbf{C}_{n}^1\ar[r]&\mathbf{C}_{n}^2\ar[r]&\cdots\ar[r]&\mathbf{C}_n^m\ar[r]&
0}\] is a cotorsion resolution of $\CF$.
\end{proof}

\begin{theorem}\label{109}
If every flat $\CO_X$-module has finite cotorsion dimension. Then
for each $1\leq i\leq m$, every flat $A_i$-module has finite
cotorsion dimension.
\end{theorem}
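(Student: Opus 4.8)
The plan is to transfer the global hypothesis to each affine chart through the open affine immersion $j_i\colon U_i=\Spec A_i\hookrightarrow X$. Since $X$ is semi-separated this immersion is an affine morphism, so the direct image $(j_i)_*\colon \Mod A_i\lrt\mathfrak{Qco}X$ is exact and, being right adjoint to the exact restriction functor $j_i^*$, preserves injectives; moreover $j_i^*(j_i)_*\cong\id$ because $j_i$ is an open immersion. First I would record that $(j_i)_*$ sends flat modules to flat sheaves: for each $j$ the sheaf $(j_i)_*\widetilde M$ restricts on $U_j$ to $M\otimes_{A_i}A_{ij}$ regarded as an $A_j$-module through the flat ring map $A_j\lrt A_{ij}$ (here $A_{ij}=\CO_X(U_i\cap U_j)$, affine by the semi-separating hypothesis), and flatness is local, so $(j_i)_*\widetilde M$ is a flat $\CO_X$-module whenever $M$ is a flat $A_i$-module.

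The second ingredient is an Ext-adjunction. Since $(j_i)_*$ is exact and preserves injectives, resolving an $A_i$-module $D$ by injectives $D\to I^\bullet$ yields an injective resolution $(j_i)_*\widetilde{I^\bullet}$ of $(j_i)_*\widetilde D$ over $X$; applying $\Hom_X(\CF,-)$ together with the basic adjunction $\Hom_X(\CF,(j_i)_*-)\cong\Hom_{A_i}(j_i^*\CF,-)$ then gives, for every $\CO_X$-module $\CF$ and every $k\geq 0$, a natural isomorphism
\[\Ext^k_{A_i}\big(j_i^*\CF,\,D\big)\;\cong\;\Ext^k_X\big(\CF,\,(j_i)_*\widetilde D\big).\]

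Now fix a flat $A_i$-module $M$ and set $\CG=(j_i)_*\widetilde M$, a flat $\CO_X$-module by the first step. By hypothesis $\cd\CG=n<\infty$, and because the cotorsion pair $(\mathrm{FlatX},\mathrm{CotX})$ is hereditary (a syzygy of a flat sheaf is again flat) this forces $\Ext^k_X(\CF,\CG)=0$ for all $k>n$ and all flat $\CO_X$-modules $\CF$. Given an arbitrary flat $A_i$-module $F$, put $\CF=(j_i)_*\widetilde F$, again flat over $X$, so that $j_i^*\CF\cong F$; the displayed isomorphism then yields $\Ext^k_{A_i}(F,M)\cong\Ext^k_X(\CF,\CG)=0$ for every $k>n$. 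As this holds for all flat $F$, the module $M$ has cotorsion dimension at most $n$, which proves the theorem.

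The main obstacle is the rigorous justification of the Ext-adjunction, which rests on two facts about the affine open immersion $j_i$: exactness of $(j_i)_*$ and its preservation of injectives. Both follow from semi-separatedness, and they are precisely what let me compute $\Ext_X$ by an injective resolution pulled up from $A_i$ and then strip it back down via the Hom-adjunction. The only other point requiring care is passing from the vanishing of a single $\Ext^{n+1}$ to the vanishing of all higher Ext, for which I invoke that the flat cotorsion pair is hereditary, so that dimension shifting along a length-$n$ cotorsion resolution of $\CG$ is valid.
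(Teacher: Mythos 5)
Your proof is correct, but it takes a genuinely different route from the paper's. The paper also transfers the problem along the affine inclusion $f\colon U_i\lrt X$, but instead of an Ext-adjunction it transports an entire resolution: it takes the minimal cotorsion resolution $0\to F\to C^0\to C^1\to\cdots$ of a flat $A_i$-module $F$ (a pure acyclic complex of flat $A_i$-modules, since cotorsion envelopes of flat modules are flat), applies the exact functor $f_*$ to get a cotorsion resolution of $f_*\widetilde{F}$ by flat $\CO_X$-modules, uses the hypothesis to conclude that $\im f_*\widetilde{\delta^{n-1}}$ is cotorsion for some $n$, so the resolution splits at that stage, and then restricts back along $f^*f_*\cong\id$ to obtain $C^n\cong\im\delta^{n-1}\oplus\im\delta^{n}$, whence $\cd F\leq n$. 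Your argument replaces the splitting step by the isomorphism $\Ext^k_{A_i}(j_i^*\CF,D)\cong\Ext^k_X(\CF,(j_i)_*\widetilde{D})$ together with dimension shifting over a hereditary cotorsion pair. Each approach buys something: the paper's version is leaner on homological formalism, but it silently needs that $f_*$ carries the flat cotorsion modules $C^j$ to cotorsion $\CO_X$-modules (true because flat cotorsion modules are pure injective and $f_*$ preserves pure injectivity, a point the paper does not justify); your version avoids that issue entirely, since you never ask $(j_i)_*$ to preserve cotorsion, only flatness and injectivity, at the cost of establishing the derived adjunction and invoking the Ext-characterization of cotorsion dimension, which, as you correctly note, rests on hereditariness of the flat cotorsion pair and also on $\mathfrak{Qco}X$ having enough flats (available here since $X$ is quasi-compact and semi-separated) to form the flat syzygies in the dimension-shifting step. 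Both proofs share the same geometric input, namely that semi-separatedness makes $(j_i)_*$ exact and flat-preserving, which you verify explicitly via the flat maps $A_j\lrt A_{ij}$; and note that neither argument needs a uniform bound, since your $n=\cd\,(j_i)_*\widetilde{M}$ may depend on $M$, exactly as the paper's $n$ depends on $F$.
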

\begin{proof}
With out lose of generality we can assume that $i=1$.  Let $F$ be a
flat $A_1$-module,   $f:U_1\lrt X$ be the inclusion and
$\xymatrix@C-0.7pc@R-0.9pc{\mathbf{C}_F: 0\ar[r]&F\ar[r]^{\xi}&C^0
\ar[r]^{\delta^0}&C^1\ar[r]^{\delta^1}&C^2\ar[r]^{\delta^2}&\cdots,}$
be  its minimal  cotorsion resolution. By construction,
$\mathbf{C}_F$  is a pure acyclic complex of flat $A_1$-modules.
Apply the exact functor ${f}_{{}_*}$ and get the  pure acyclic
complex
$\xymatrix@C-0.7pc@R-0.9pc{{f}_{{}_*}(\widetilde{\mathbf{C}_F}):
0\ar[r]&{f}_{{}_*}\widetilde{F}
\ar[r]^{{f}_{{}_*}\widetilde{\xi}}&{f}_{{}_*}\widetilde{C^0}
\ar[r]^{{f}_{{}_*}\widetilde{\delta^0}}&{f}_{{}_*}\widetilde{C^1}\ar[r]^{{f}_{{}_*}\widetilde{\delta^1}}
&{f}_{{}_*}\widetilde{C^2}
\ar[r]^{{f}_{{}_*}\widetilde{\delta^2}}&\cdots}$ of flat
$\CO_X$-modules. In fact, it is  a cotorsion resolution of
${f}_{{}_*}\widetilde{F}$.  The assumption  implies that
$\im{f}_{{}_*}\widetilde{\delta^{n-1}}$ is cotorsion for some
integer $n$. So the exact sequence
$$\xymatrix@C-0.7pc@R-0.9pc{0\ar[r]&\im{f}_{{}_*}\widetilde{\delta^{n-1}}\ar[r]&
{f}_{{}_*}(\widetilde{C^n})\ar[r]&\im{f}_{{}_*}\widetilde{\delta^{n}}\ar[r]&0}$$
of flat $\CO_X$-modules  splits. Then
$C^n=\im\delta^{n-1}\oplus\im\delta^{n}$. It follows that $\cd F\leq
n$.

\end{proof}

$\mathbf{Proof~ of ~Theorem ~1.1.}$ $(i)\lrt(ii)$ If every flat
$\CO_X$-module has finite cotorsion dimension. Then by Theorem
\ref{109}, for each $0\leq i \leq m$, every flat $A_i$-module has
finite cotorsion dimension. So,  for each $i$ there exist  an
integer $n_i\geq 0$ such that $A_i$ is $n_i$-perfect. Therefore the
proof of Theorem \ref{nper} implies that $X$ is $n$-perfect for some
integer $n\geq 0$.

$(ii)\lrt(i)$ Clear.

\begin{theorem}\label{perf}
A scheme $X$ is  $n$-perfect if and only if for every $\CO_X$-module
$\CG$, $\cd~\CG\leq n$.
\end{theorem}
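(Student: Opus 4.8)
The reverse implication is essentially the trivial inclusion of classes: the supremum defining $n$-perfectness is taken over the subclass of flat $\CO_X$-modules, so any bound $\cd\,\CG\le n$ valid for all $\CG$ restricts to flat modules and yields $\sup\{\cd\,\CF\mid\CF\in\mathrm{Flat}X\}\le n$. For the forward implication I would first reformulate cotorsion dimension homologically: since $(\mathrm{Flat}X,\mathrm{Cot}X)$ is a hereditary complete cotorsion theory, $\cd\,\CG\le n$ holds if and only if $\Ext^{n+1}_X(\CF,\CG)=0$ for every flat $\CF$ (the $n$-th cosyzygy of a cotorsion coresolution is flat, and it is cotorsion exactly when this $\Ext$ vanishes). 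Thus the task becomes: show that the hypothesis $\cd\,\CF\le n$ for all flat $\CF$, i.e. $\Ext^{n+1}_X(\CF,\CF')=0$ for all flat $\CF,\CF'$, forces $\Ext^{n+1}_X(\CF,\CG)=0$ for every flat $\CF$ and \emph{arbitrary} $\CG$.

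The essential point is a ring-theoretic lemma, which is precisely where the bound $n$ (rather than $n+1$) is gained. Over a commutative ring $A$ in which every flat module has cotorsion dimension $\le n$, every flat module in fact has projective dimension $\le n$. To see this, recall that syzygies of a flat module are again flat (flatness is resolving), so the $n$-th syzygy $G$ of a flat $A$-module is flat and, by dimension shifting, satisfies $\Ext^1_A(G,F')=0$ for every flat $F'$. Choosing a projective presentation $0\to \Syz G\to P\to G\to 0$, the kernel $\Syz G$ is flat, whence $\Ext^1_A(G,\Syz G)=0$ and the presentation splits; therefore $G$ is projective and the original flat module has projective dimension $\le n$. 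Consequently $\Ext^{n+1}_A(F,-)$ vanishes identically for every flat $F$, so \emph{every} $A$-module has cotorsion dimension $\le n$.

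It remains to transport this along the semi-separating cover $\mathfrak{U}=\{\Spec A_i\}_{i=1}^m$. If $X$ is $n$-perfect then, by Theorem \ref{109}, every flat module over each $A_i$ and over each affine intersection has finite cotorsion dimension, so the lemma applies and every module over these affine rings has cotorsion dimension bounded by a common $n$. Given an arbitrary $\CO_X$-module $\CG$, I would then resolve it by its Čech complex
\[\xymatrix@C-0.9pc{0\ar[r]&\CG\ar[r]&\mathfrak{C}^0(\mathfrak{U},\CG)\ar[r]&\cdots\ar[r]&\mathfrak{C}^{m-1}(\mathfrak{U},\CG)\ar[r]&0,}\]
whose terms are pushforwards from the affine intersections of modules of cotorsion dimension $\le n$. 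Since these pushforwards are exact and preserve cotorsion modules (exactly as in the proof of Theorem \ref{109}), each Čech term has finite cotorsion dimension over $X$, and the bounded cotorsion theory of Theorem \ref{108} assembles them into a cotorsion coresolution of $\CG$ of length $\le n$, just as in Theorem \ref{nper}; this gives $\cd\,\CG\le n$.

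The step I expect to be the main obstacle is precisely the passage from $n+1$ to $n$. A purely global argument — taking the special $\mathrm{Cot}X$-preenvelope $0\to\CG\to\C\to\CF'\to0$, whose cokernel $\CF'$ is forced to lie in ${}^{\perp}\mathrm{Cot}X=\mathrm{Flat}X$ — only yields $\cd\,\CG\le\max(0,\cd\,\CF'+1)\le n+1$, because a flat sheaf of cotorsion dimension $\le n$ need not itself be cotorsion and $\mathfrak{Qco}X$ carries no nonzero projectives to split against. This is exactly why one must descend to the affine charts, where projectives exist and the splitting argument of the lemma upgrades ``cotorsion dimension $\le n$'' to ``projective dimension $\le n$''; the remaining technical care is to verify that the pushforwards along the inclusions of the affine intersections preserve the bound $n$, so that the Čech reassembly does not reintroduce the lost step.
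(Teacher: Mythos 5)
Your reverse direction and your affine ring lemma (over a ring, ``every flat module has cotorsion dimension $\le n$'' upgrades to ``every flat module has projective dimension $\le n$'' via the split syzygy argument) are both correct. The genuine gap is in the final assembly step: you claim the \v{C}ech reassembly produces a cotorsion coresolution of $\CG$ of length $\le n$ ``just as in Theorem \ref{nper}'', but the construction in Theorem \ref{nper} does no such thing --- splicing a \v{C}ech resolution of length $m-1$ whose terms have cotorsion dimension $\le n$ yields a coresolution of length on the order of $n+m$, and a straightforward d\'ecalage through the \v{C}ech cosyzygies (which are not pushforwards, so your affine bound does not apply to them) gives only $\cd~\CG\le n+m-1$. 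That proves finiteness of $\cd~\CG$, i.e.\ a weaker statement, but not the sharp bound $\cd~\CG\le n$ that the theorem asserts; note that Theorem \ref{nper} itself correspondingly concludes only ``$n$-perfect for \emph{some} $n$''.

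Moreover, the obstacle you diagnose --- that any global argument loses a degree and forces descent to affine charts --- is illusory, because you picked the wrong half of the cotorsion pair. The paper's proof is a three-line global argument: take the \emph{flat cover} $0\rt\CC\rt\CF'\rt\CG\rt 0$, whose kernel $\CC$ is cotorsion (Wakamatsu's lemma). Since the pair $(\mathrm{FlatX},\mathrm{CotX})$ is hereditary (flats are closed under kernels of epimorphisms), $\Ext^{i}_X(\CF,\CC)=0$ for all $i\ge 1$ and all flat $\CF$, so the long exact sequence gives $\Ext^{n+1}_X(\CF,\CG)\cong\Ext^{n+1}_X(\CF,\CF')$, and the latter vanishes because $\CF'$ is flat and $X$ is $n$-perfect; hence $\cd~\CG\le n$ directly, with no affine descent, no projective modules, and no \v{C}ech complex. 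Your cotorsion-preenvelope sequence $0\rt\CG\rt\C\rt\CF'\rt 0$ indeed only yields $n+1$, but the cover-side approximation puts the flat object in the middle and the cotorsion object in degree-shifted position, which is exactly what absorbs the extra degree you were trying to recover through the charts.
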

\begin{proof}
Let $X$ be $n$-perfect, $\CG$ be an $\CO_X$-module and
$\xymatrix@C-0.7pc@R-0.9pc{0\ar[r]&\CC\ar[r]&\CF'\ar[r]&\CG\ar[r]&0}$
be the flat cover of $\CG$. Then for any flat $\CO_X$-module $\CF$
we have the following exact sequence
\[\xymatrix@C-0.7pc@R-0.9pc{0 = \Ext^{n+1}_X(\CF,\CC)\ar[r]&
\Ext^{n+1}_X(\CF,\CF')\ar[r]&\Ext^{n+1}_X(\CF,\CG)\ar[r]&
\Ext^{n+2}_X(\CF,\CC)=0.}\] Then $\Ext^{n+1}_X(\CF,\CG)=0$ and hence
$\cd~\CG\leq n$.

The converse is trivial.
\end{proof}
Now by using  the main Theorem of  \cite{Si} we give  examples of
non-noetherian $n$-perfect schemes of infinite  Krull dimension.
\begin{example}
Let $R$ be a  ring, $|R|\leq\aleph_n$ for some $n\geq 0$,
$A=R[x_1,x_2,...]$ be the polynomial ring of infinite indeterminate
and $X=\bigcup_{i=1}^{i=m}D(f_i)$ be an open subscheme of Spec$A$.
Then $X$ is a non-noetherian non-affine scheme of infinite krull
dimension(for definitions and notations see \cite[II.2]{H}). By the
same argument that used in the proof of Theorem \ref{nper} we deduce
that $X$ is $k$-perfect for some $k$.
\end{example}
\begin{example}
Let $\mathfrak{T}$ be a topological space of cardinality at most
$\aleph_n$ for some integer  $n\geq 0$. If $\mathfrak{T}$ is not
$p$-space, then the commutative ring $\mathrm{C}(\mathfrak{T})$, the
ring of real valued continuous functions on $\mathfrak{T}$, is  a
non-noetherian $(n+1)$-perfect ring of infinite krull dimension. For
example the metric space $\mathbb{R}$(real numbers) is not a
$p$-space.
\end{example}
\begin{example}
If $R$ is a  noetherian ring of finite krull dimension $n$. Then it
is $n$-perfect.
\end{example}
\begin{example}
If $R$ is $n$-perfect. Then $R[x]$ is also $(n+1)$-perfect.
\end{example}
\begin{example}
The Nagata's example of a noetherian ring of infinite krull
dimension is $n$-perfect for some integer $n$, see[Appendix, Example
1]\cite{Na}
\end{example}
\subsection{Pure injective dimension of flat $\CO_X$-modules}
 Recall that an exact sequence
$\xymatrix@C-0.7pc@R-0.9pc{0\ar[r]&\CK\ar[r]& \CG}$  of
$\CO_X$-modules  is called  pure if it remains exact after tensoring
with any $\CO_X$-module. An $\CO_X$-module $\CE$ is called pure
injective if it is injective with respect pure exact sequences of
$\CO_X$-modules. For a given $\CO_X$-module $\CF$, let  $\CF^*=
\oplus_{i=1}^m{f_i}_*\widetilde{F_i^*}$, $\CF^{**}=
\oplus_{i=1}^m{f_i}_*\widetilde{F_i^{**}}$ such that for each $1\leq
i\leq m$, $F_i=\CF(U_i)$, $F_i^*=\mathrm{Hom}_{\Z}(F_i,{\Q}/{\Z})$,
and $\xymatrix@C-0.7pc@R-0.9pc{f_i:U_i\ar[r]&X}$ be the inclusion.
Then $\CF^*$ and $\CF^{**}$  are pure injective $\CO_X$-modules and
$\CF\lrt\CF^{**}$ is a pure monomorphism.

In this subsection we let $X$ be a \textit{coherent} scheme. Recall
that $X$ is called coherent if $A_i$ is a coherent ring for each
$1\leq i\leq m$

\begin{sproposition}\label{co1}
Let  $\CF$ be a flat $\CO_X$-module. Then $\CF$ is pure injective if
and only if it is cotorsion.
\end{sproposition}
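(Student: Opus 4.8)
Let $\CF$ be a flat $\CO_X$-module. Then $\CF$ is pure injective if and only if it is cotorsion. (Here $X$ is coherent, i.e. each $A_i$ is coherent.)

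Let me think about this carefully.The plan is to prove the two implications separately, treating ``pure injective $\Rightarrow$ cotorsion'' as the routine direction and ``cotorsion $\Rightarrow$ pure injective'' as the substantial one, where the coherence hypothesis is genuinely used.

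For the forward implication I would start from an arbitrary short exact sequence $\xymatrix@C-0.7pc{0\ar[r]&\CF\ar[r]&\CM\ar[r]&\CG\ar[r]&0}$ of $\CO_X$-modules with $\CG$ flat. The point is that any such sequence is automatically pure: tensoring with an arbitrary $\CO_X$-module $\CN$, flatness of $\CG$ makes $\Tor_1(\CG,\CN)=0$, so $\CF\otimes\CN\lrt\CM\otimes\CN$ stays injective and the sequence remains exact. Since $\CF$ is pure injective, this pure monomorphism splits, whence $\Ext^1_X(\CG,\CF)=0$. As $\CG$ ranges over all flat $\CO_X$-modules, this shows $\CF$ is cotorsion.

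For the converse I would exploit the canonical pure monomorphism $\CF\lrt\CF^{**}$ recalled just before the statement, with cokernel $\CC$, giving a pure exact sequence $\xymatrix@C-0.7pc{0\ar[r]&\CF\ar[r]&\CF^{**}\ar[r]&\CC\ar[r]&0}$ in which $\CF^{**}$ is pure injective. The heart of the argument is to show that $\CC$ is flat; granting this, the sequence is an extension of the flat module $\CC$ by the cotorsion module $\CF$, so $\Ext^1_X(\CC,\CF)=0$, the sequence splits, and $\CF$ becomes a direct summand of the pure injective $\CF^{**}$, hence is itself pure injective. To obtain flatness of $\CC$ I would first argue that $\CF^{**}$ is flat and then invoke purity once more: in a pure sequence whose sub- and total objects are flat, the quotient is flat, since $\Tor_1(\CC,\CN)$ injects into the map $\CF\otimes\CN\lrt\CF^{**}\otimes\CN$, which is injective, while $\Tor_1(\CF^{**},\CN)=0$.

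It remains to see that $\CF^{**}=\oplus_{i=1}^m{f_i}_*\widetilde{F_i^{**}}$ is flat. Since direct sums and the functors $\widetilde{(-)}$ and ${f_i}_*$ preserve flatness of $\CO_X$-modules---the latter being precisely the pushforward fact already used in Theorem \ref{109}, which rests on semi-separatedness making each $f_i$ affine---it suffices to check that each $F_i^{**}$ is a flat $A_i$-module. This is exactly where coherence enters: as $F_i=\CF(U_i)$ is flat, Lambek's theorem shows the character module $F_i^*$ is injective, and over the coherent ring $A_i$ the character module of an (FP-)injective module is flat, so $F_i^{**}=(F_i^*)^*$ is flat. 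I expect this flatness of $\CF^{**}$ to be the main obstacle, and within it the coherence-dependent step that $F_i^{**}$ is flat over $A_i$, together with the bookkeeping that ${f_i}_*\widetilde{(-)}$ carries flat $A_i$-modules to flat $\CO_X$-modules; once both are in hand, the splitting argument closes the proof.
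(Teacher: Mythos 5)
Your proof is correct and takes essentially the same route as the paper: the substantial direction is identical (the pure monomorphism $\CF\lrt\CF^{**}$ has flat cokernel, cotorsionness of $\CF$ gives $\Ext^1_X(\CF^{**}/\CF,\CF)=0$ and the splitting exhibits $\CF$ as a summand of the pure injective $\CF^{**}$), with the added merit that you actually justify the flatness of $\CF^{**}$ via coherence (Lambek plus the Cheatham--Stone fact over coherent rings), a point the paper uses silently. For the easy direction you split an arbitrary extension by a flat sheaf directly, whereas the paper splits the cotorsion envelope sequence, but both rest on the same observation that short exact sequences with flat cokernel are pure.
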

\begin{proof}
Let  $\xymatrix@C-0.7pc@R-0.9pc{
0\ar[r]&\CF\ar[r]&\CC\ar[r]&\CG\ar[r]&0}$ be the cotorsion envelope
of $\CF$. Since $\CF$ is flat then  this sequence is pure and hence
it is split.

Let $\CF$ be a cotorsion $\CO_X$-module and
$\xymatrix@C-0.7pc@R-0.9pc{
0\ar[r]&\CF\ar[r]&\CF^{**}\ar[r]&\frac{\CF^{**}}{\CF}\ar[r]&0}$ be
its pure injective preenvelope. Since $\CF$ and $\CF^{**}$ are flat
$\CO_X$-module  then  $\frac{\CF^{**}}{\CF}$ is also flat and so
this sequence is split.
\end{proof}
The pure injective dimension of an $\CO_X$-module $\CF$
 can be defined in usual sense.
\begin{stheorem}
A scheme $X$ is $n$-perfect if and only if every $\CO_X$-module has
finite pure injective dimension.
\end{stheorem}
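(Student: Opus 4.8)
The plan is to prove the two implications separately, dispatching the easy direction first. For the reverse implication I only need the hypothesis for flat sheaves, so assume every flat $\CO_X$-module has finite pure injective dimension. Given a flat $\CF$, fix a pure injective coresolution $0\to\CF\to\CE^0\to\cdots\to\CE^d\to0$ with each sequence pure exact and each $\CE^j$ pure injective. Since a pure submodule and a pure quotient of a flat module are again flat, every syzygy is flat, and in particular every $\CE^j$ is a \emph{flat} pure injective sheaf; by Proposition~\ref{co1} each $\CE^j$ is then cotorsion. Hence the coresolution is a cotorsion resolution and $\cd\CF\le d<\infty$. Thus every flat $\CO_X$-module has finite cotorsion dimension, and Theorem~1.1 gives that $X$ is $n$-perfect for some $n$.

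For the forward implication, assume $X$ is $n$-perfect. By Theorem~\ref{perf} every $\CO_X$-module $\CG$ satisfies $\cd\CG\le n$, so I can build a coresolution $0\to\CG\to\CC^0\to\CC^1\to\cdots\to\CC^n\to0$ out of special cotorsion preenvelopes for the theory $(\mathrm{Flat}X,\mathrm{Cot}X)$, with each $\CC^i$ cotorsion and each cokernel $\CZ^{i+1}=\Coker(\CZ^i\to\CC^i)$ lying in ${}^{\perp}(\mathrm{Cot}X)=\mathrm{Flat}X$. Consequently $\CZ^i$ is flat for every $i\ge1$; every sequence $0\to\CZ^i\to\CC^i\to\CZ^{i+1}\to0$ is therefore pure (its cokernel is flat), and for $i\ge1$ the term $\CC^i$, being an extension of the flat sheaves $\CZ^i$ and $\CZ^{i+1}$, is flat and cotorsion, hence pure injective by Proposition~\ref{co1}. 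So $0\to\CG\to\CC^0\to\cdots\to\CC^n\to0$ is a pure exact coresolution whose terms $\CC^1,\dots,\CC^n$ are all pure injective; when $\CG$ itself is flat one may take $\CC^0$ flat cotorsion (again pure injective), which already proves $\pid\CF\le n$ for flat $\CF$. In general, a dimension-shifting argument in the pure extension functors $\mathrm{Pext}$, using $\pid\CZ^1\le n-1$ from the flat case, shows $\pid\CG\le n$ as soon as the single leading term obeys $\pid\CC^0\le n$.

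The whole statement thus reduces to bounding the pure injective dimension of an arbitrary \emph{cotorsion} $\CO_X$-module, and this is the step I expect to be the main obstacle: for non-flat $\CG$ the cotorsion envelope $\CC^0$ is never flat (an extension argument shows $\CC^0$ flat would force $\CG$ flat), so Proposition~\ref{co1} no longer applies to it. To overcome this I would descend to the affine cover: exactly as in the proof of Theorem~1.1 via Theorem~\ref{109}, $n$-perfectness of $X$ forces each coherent ring $A_i$ to be $n_i$-perfect, and over a coherent $n_i$-perfect ring the affine case of the theorem gives that every module has finite pure injective dimension. I would then iterate the explicit construction $\CG\rightsquigarrow\CG^{**}=\oplus_i{f_i}_*\widetilde{G_i^{**}}$, setting $\CG_{k+1}=\CG_k^{**}/\CG_k$, to produce a pure injective coresolution of $\CC^0$, and argue that with $N=\max_i n_i$ the cosyzygy $\CC^0_N$ is pure injective because it is locally so over each $U_i$, using the exactness of ${f_i}_*$ and that it carries pure injective $A_i$-modules to pure injective $\CO_X$-modules. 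The delicate point — and the part that will need the most care — is precisely this globalization: showing that a quasi-coherent sheaf whose sections over every $U_i$ are pure injective $A_i$-modules is itself pure injective, so that the affine-locally terminating resolutions assemble into a globally finite one.
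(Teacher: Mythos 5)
Your reverse implication contains a step that fails. From an \emph{arbitrary} finite pure injective coresolution $0\to\CF\to\CE^0\to\cdots\to\CE^d\to0$ you infer that every cosyzygy is flat, citing that pure submodules and pure quotients of flat modules are flat. That lemma requires the \emph{middle} term to be flat, and $\CE^0$ is only assumed pure injective; the hypothesis gives you the existence of some finite coresolution, not one of your choosing. Concretely, if $\CQ$ is pure injective and not flat, then $\CF\to\CF^{**}\oplus\CQ$ begins a legitimate pure injective coresolution of a flat $\CF$ whose first cosyzygy $(\CF^{**}/\CF)\oplus\CQ$ is not flat, so your induction never starts. The repair is to work with the canonical coresolution obtained by iterating $\CF\mapsto\CF^{**}$: coherence of $X$ keeps each term flat as well as pure injective, the cosyzygies are then genuinely flat, and a dimension shift in pure Ext shows that $\pid\CF\le d$ forces the $d$-th cosyzygy of \emph{this} coresolution to be pure injective, hence flat cotorsion by Proposition \ref{co1}; this gives $\cd\CF\le d$, and Theorem 1.1 then yields $n$-perfectness. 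This is what the paper compresses into the assertion that the converse is trivial by Proposition \ref{co1}.

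In the forward direction your argument for flat $\CG$ coincides with the paper's: the minimal cotorsion resolution has flat cosyzygies, hence is pure exact, its terms are flat cotorsion and therefore pure injective by Proposition \ref{co1}, and Theorem \ref{perf} truncates it at length $n$. For non-flat $\CG$ you stop short of a proof, and your diagnosis is accurate: since $\CZ^1$ is flat, flatness of $\CC^0$ would force the pure submodule $\CG$ to be flat, so Proposition \ref{co1} cannot certify that the envelope $\CC^0$ is pure injective. This is exactly the point at which the paper's own proof is silent --- it declares the truncated minimal cotorsion resolution of an arbitrary $\CG$ to be a pure injective resolution, which tacitly assumes the cotorsion $\CC^0$ is pure injective without justification. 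Your proposed repair, however, is not a proof either: its crux, that a quasi-coherent sheaf whose sections over every $U_i$ are pure injective $A_i$-modules is itself pure injective, is established nowhere (the pure injectives of the paper arise only as direct summands of sheaves of the form $\oplus_i {f_i}_*\widetilde{G_i^{**}}$, and local pure injectivity does not visibly produce such a summand presentation), and you concede this yourself. So as written your proposal proves the equivalence of $n$-perfectness with ``every \emph{flat} $\CO_X$-module has finite pure injective dimension,'' modulo the fix above, while the forward implication for general $\CO_X$-modules remains open in your write-up --- with the missing step located precisely where the paper asserts rather than argues.
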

\begin{proof}
Let $\CG$ be an $\CO_X$-module and
$$\xymatrix@C-0.7pc@R-0.9pc{0\ar[r]&\CG\ar[r]&\CC^0\ar[r]^{\delta^0}&\CC^1\ar[r]^{\delta^1}&\cdots}$$
be its minimal cotorsion resolution. By Theorem \ref{perf},
$\im\delta^{n-1}$ is cotorsion flat and  by  Proposition \ref{co1}, it is pure
injective. Therefore this pure exact sequence is a pure injective
resolution of $\CG$ of length   $n$.

By Proposition \ref{co1}, the converse is
trivial.
\end{proof}

\section{Application}
Let $\KFX$ be the homotopy category of complexes of flat
$\CO_X$-modules, $\KPFX$ be the full subcategory of $\KFX$
consisting of all pure acyclic complexes of flat $\CO_X$-modules and
$\KCOFX$ be the essential  image of the homotopy category of
complexes of cotorsion flat $\CO_X$-modules.  In \cite{HS}, the authors proved that
there is an equivalence $\KCOFX\lrt\KFX/{\KPFX}=\DFX$ of homotopy
categories, whenever $\KPFX\cap\KCOFX=0$ and $\mathfrak{Qco}X$ have
enough flats. For instance such equivalenece of homotopy categories
exists, when $X$ is $n$-perfect (possibly  non-noetherian of  infinite
Krull dimension).

In this section we let $X$ be a coherent, $\CPFX$ be the category of
all flat complexes of $\CO_X$-modules and
$\mathbf{C}(\mathrm{Pinj}X)$ be the category of complexes of pure
injective  $\CO_X$-modules.

\begin{theorem}\label{i2}
Let $\C$ be a complex of $\CO_X$-modules. Then $\C\in\CPFX^\perp$ if
and only if it is a complex of pure injective $\CO_X$-modules .
\end{theorem}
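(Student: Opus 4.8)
The plan is to treat the two implications separately, relying throughout on the description of the objects of $\CPFX$: a complex $\F$ of flat $\CO_X$-modules is pure acyclic exactly when it is acyclic with flat cycles $Z^n(\F)$, so that each $\xymatrix@C-0.9pc{0\ar[r]&Z^n(\F)\ar[r]&\F^n\ar[r]&Z^{n+1}(\F)\ar[r]&0}$ is a pure exact sequence of flats. The technical device shared by both directions is the claim that every chain map from such an $\F$ into a complex with pure injective (in particular cotorsion) components is null-homotopic. I would prove this by building a homotopy $h^n\colon\F^n\to\C^{n-1}$ one degree at a time: at each stage the required map is produced by extending along the pure monomorphism $Z^n(\F)\hookrightarrow\F^n$, which is possible because $\C^{n-1}$ is pure injective. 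This is the pure, unbounded analogue of the argument underlying Proposition \ref{106} and \cite[Proposition 2.1]{HS}, and I would phrase it so that it can be reused verbatim in both halves.

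For $(\Leftarrow)$, suppose every $\C^n$ is pure injective and let $\xymatrix@C-0.9pc{0\ar[r]&\C\ar[r]&\E\ar[r]&\F\ar[r]&0}$ be exact with $\F\in\CPFX$. In each degree the sequence $0\to\C^n\to\E^n\to\F^n\to 0$ is pure, because $\F^n$ is flat, and hence splits, because $\C^n$ is pure injective; thus the extension is degreewise split. A degreewise split extension is classified by the homotopy class in $\Hom_{\mathbf{K}}(\F,\C[1])$ of the chain map assembled from the differential of $\E$, and by the null-homotopy device this class vanishes. Hence the sequence splits in $\mathbf{C}(\mathfrak{Qco}X)$, so $\C\in\CPFX^\perp$.

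For $(\Rightarrow)$, suppose $\C\in\CPFX^\perp$. For a flat $\CO_X$-module $F$ the disk complex $D^n(F)$, namely $F$ in two adjacent degrees joined by the identity, is contractible with flat terms, so it lies in $\CPFX$. Since $D^n(-)$ sends short exact sequences of modules to degreewise split short exact sequences of complexes and is left adjoint to an (exact) component functor, one obtains a natural isomorphism $\Ext^1_{\mathbf{C}}(D^n(F),\C)\cong\Ext^1_X(F,\C^{j})$ for the appropriate $j$. As the left-hand side vanishes for every flat $F$ and every $n$, each component of $\C$ is cotorsion. The final move is to promote \emph{cotorsion} to \emph{pure injective}, and this is the step where coherence of $X$ is indispensable, entering through Proposition \ref{co1}, which for flat modules identifies cotorsion and pure injective objects.

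I expect this last upgrade to be the main obstacle, and the place where the hypotheses are genuinely used. Testing against the contractible complexes $D^n(F)$ only ever detects cotorsion-ness, because every module occurring in a member of $\CPFX$, its terms and its cycles alike, is flat; so passing to pure injectivity requires either the coherence-driven identification of Proposition \ref{co1} applied to the flat components, or a direct splitting argument against the genuinely non-split members of $\CPFX$ combined with the functorial pure injective preenvelope $(-)^{**}$. Making the null-homotopy device precise in the pure acyclic setting and pinning down this cotorsion-to-pure-injective passage are the two points I would write out in full; the remaining steps are the formal manipulations sketched above.
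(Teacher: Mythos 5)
Your $(\Leftarrow)$ direction is sound in outline but takes a genuinely different route from the paper: the paper invokes the completeness of the cotorsion theory (\cite[Proposition 2.6]{HS}) to embed $\C$ in a degreewise split sequence $0\to\C\to\C'\to\F'\to 0$ with $\C'\in\CPFX^\perp$ and $\F'$ a pure acyclic complex of \emph{cotorsion} flat sheaves, then uses $n$-perfectness to conclude $\F'$ is contractible, so that $\C\cong\C'$ in the homotopy category; your argument instead splits an arbitrary extension degreewise and kills its class in $\Hom_{\K(X)}(\F,\Sigma\C)$, which avoids $n$-perfectness entirely. The cost is that your null-homotopy device, as described, does not work for unbounded complexes: building $h^{n+1}$ from $h^n$ by extending along the pure monomorphism $Z^{n+1}(\F)\hookrightarrow\F^{n+1}$ is a forward induction with no base case when $\F\in\CPFX$ is unbounded in both directions (it starts fine only for bounded-below $\F$, where one may set early $h^n=0$). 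To make it precise you need real input here --- e.g.\ the description of pure acyclic complexes of flats as filtered colimits of contractible complexes, or the small-object/deconstructibility machinery for the pure exact structure --- which is in effect what the paper outsources to \cite[Proposition 2.1]{HS}.

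The genuine gap is in $(\Rightarrow)$, and you half-identified it without closing it. Your disk-complex test $D^n(F)$, $F$ flat, only ever yields that each $\C^j$ is \emph{cotorsion}, and Proposition \ref{co1} upgrades cotorsion to pure injective only for \emph{flat} sheaves; nothing in your argument, and nothing in the condition $\C\in\CPFX^\perp$, makes the components of $\C$ flat, so the phrase ``applied to the flat components'' has no referent, and you explicitly leave both of your proposed repairs unexecuted. The gap is not cosmetic: read literally for arbitrary complexes, the forward implication fails. If $\CC$ is a cotorsion sheaf that is not pure injective, the stalk complex $\CC[0]$ lies in $\CPFX^\perp$ --- every extension by $\F\in\CPFX$ is degreewise split since the $\F^i$ are flat and $\CC$ is cotorsion, and the classifying class in $\Hom_{\K(X)}(\F,\CC[1])$ vanishes because the cycles of $\F$ are flat --- yet $\CC[0]$ is not a complex of pure injectives. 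So the flatness of the components must be supplied by the ambient setting: the theorem is really a statement inside the framework of \cite{HS}, where one works with complexes of flat sheaves (note the paper's own proof forms a triangle in $\KFX$, and all subsequent corollaries concern pure injective \emph{flat} complexes); there \cite[Proposition 2.1]{HS} gives flat cotorsion components and Proposition \ref{co1}, using coherence, finishes degreewise. Committing to that restriction, and then executing your first suggested route, is exactly what is missing from your write-up.
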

\begin{proof}
Let $\C$ be a complex of pure injective $\CO_X$-modules. By \cite[Proposition 2.6]{HS}, there is a degree-wise split exact sequence
$\xymatrix@C-0.7pc@R-0.9pc{0\ar[r]&\C\ar[r]&\C'\ar[r]&\F'\ar[r]&0} $
where $\C'\in\CPFX^\perp$  and $\F'\in\CPFX$. Therefore  we have  a
canonical morphism $ u: \mathbf{F}' \rightarrow \Sigma \mathbf{C}$
such that $\xymatrix@C-0.7pc@R-0.9pc{
\mathbf{C}\ar[r]&\mathbf{C}'\ar[r]&\mathbf{F}'\ar[r]^u&\Sigma
\mathbf{C}}$is a triangle in $\KFX$.  Moreover, $\F'$ is a complex
of cotorsion $\CO_X$-modules and   hence it is contractible by
$n$-perfectness of $X$. It follows that  for each flat complex $\F$,
$\Hom_{\K(X)}(\F,\C)\cong\Hom_{\K(X)}(\F,\C')=0$.  Therefore  by
\cite[Proposition 2.1]{HS}, $\C\in\CPFX^\perp$.

The converse follows from \cite[Proposition 2.1]{HS}.
\end{proof}
\begin{corollary}
The cotorsion theory $(\CPFX, \mathbf{C}(\mathrm{Pinj}X))$ is
complete.
\end{corollary}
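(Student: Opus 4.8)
By Theorem \ref{i2} we already have the orthogonality $\CPFX^\perp=\mathbf{C}(\mathrm{Pinj}X)$, so $(\CPFX,\mathbf{C}(\mathrm{Pinj}X))$ is a cotorsion theory and the only thing left to establish is its completeness: for every complex $\C$ of $\CO_X$-modules we must produce a special $\mathbf{C}(\mathrm{Pinj}X)$-preenvelope and a special $\CPFX$-precover. The plan is to read the preenvelope directly off the proof of Theorem \ref{i2}, and then to obtain the precover by Salce's lemma in the degree-wise split exact structure on complexes.

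First I would record the preenvelope. Applied to an arbitrary complex $\C$, the construction of \cite[Proposition 2.6]{HS} yields a degree-wise split short exact sequence $\xymatrix@C-0.7pc{0\ar[r]&\C\ar[r]&\C'\ar[r]&\F'\ar[r]&0}$ with $\C'\in\CPFX^\perp$ and $\F'\in\CPFX$; by Theorem \ref{i2} the term $\C'$ is a complex of pure injective $\CO_X$-modules. This sequence is exactly a special $\mathbf{C}(\mathrm{Pinj}X)$-preenvelope of $\C$, so one half of completeness is immediate.

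For the precover I would work in the Frobenius exact category given by the degree-wise split conflations on $\mathbf{C}(\mathfrak{Qco}X)$, whose projective--injective objects are the contractible complexes and in which $\Ext^1$ is computed by $\Hom_{\KFX}(-,\Sigma(-))$, so that the $\Ext$-orthogonal of $\CPFX$ coincides with the class described in \cite[Proposition 2.1]{HS}. Given $\C$, present it as a degree-wise split quotient $\xymatrix@C-0.7pc{0\ar[r]&\mathbf{L}\ar[r]&\mathbf{P}\ar[r]&\C\ar[r]&0}$ of a contractible complex $\mathbf{P}$, take the special preenvelope $\xymatrix@C-0.7pc{0\ar[r]&\mathbf{L}\ar[r]&\mathbf{B}\ar[r]&\mathbf{A}\ar[r]&0}$ of $\mathbf{L}$ from the previous paragraph (so $\mathbf{B}\in\mathbf{C}(\mathrm{Pinj}X)$ and $\mathbf{A}\in\CPFX$), and form the pushout of $\mathbf{L}\to\mathbf{P}$ and $\mathbf{L}\to\mathbf{B}$. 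This produces $\mathbf{D}$ together with conflations $\xymatrix@C-0.7pc{0\ar[r]&\mathbf{B}\ar[r]&\mathbf{D}\ar[r]&\C\ar[r]&0}$ and $\xymatrix@C-0.7pc{0\ar[r]&\mathbf{P}\ar[r]&\mathbf{D}\ar[r]&\mathbf{A}\ar[r]&0}$. Since $\mathbf{P},\mathbf{A}\in\CPFX$ and $\CPFX$, being the left orthogonal, is closed under extensions, we get $\mathbf{D}\in\CPFX$; as $\mathbf{B}\in\mathbf{C}(\mathrm{Pinj}X)$, the first conflation is the desired special $\CPFX$-precover of $\C$.

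The step I expect to be the main obstacle is exactly this precover construction, namely checking that the hypotheses of Salce's lemma (see \cite{EJ}) really hold here: one must know that the contractible complexes used as projectives lie in $\CPFX$ and that the left-orthogonal identification $\CPFX={}^{\perp}\mathbf{C}(\mathrm{Pinj}X)$ of \cite[Proposition 2.1]{HS} is available on the nose, since a contractible complex need not be a pure acyclic complex of flat sheaves. If that identification is not immediately at hand, the fallback is to bypass Salce's lemma and build the precover directly, by lifting the complete hereditary flat cotorsion theory $(\mathrm{Flat}X,\mathrm{Cot}X)$ of \cite[4.2]{EE} to complexes along the pure exact structure, in the same spirit as the preenvelope produced by \cite[Proposition 2.6]{HS}.
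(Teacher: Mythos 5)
Your preenvelope half is correct and is essentially what the paper itself relies on: applying \cite[Proposition 2.6]{HS} to $\C$ gives the degree-wise split sequence $0\rt\C\rt\C'\rt\F'\rt 0$ with $\F'\in\CPFX$, and Theorem \ref{i2} identifies $\C'\in\CPFX^\perp$ with a complex of pure injectives. The genuine gap is the one you flagged yourself, and it is fatal to the Salce step as written. In the degree-wise split exact structure the projective objects are contractible complexes of \emph{arbitrary} $\CO_X$-modules, and these do not lie in $\CPFX$; worse, the identification $\CPFX={}^{\perp}\mathbf{C}(\mathrm{Pinj}X)$ that would rescue the argument is actually false in that structure, because $\Ext^1$ there is $\Hom_{\K(X)}(-,\Sigma(-))$, so \emph{every} contractible complex, flat or not, lies in the left orthogonal while only those with flat terms lie in $\CPFX$. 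Consequently, in your pushout you cannot conclude $\mathbf{D}\in\CPFX$ from closure under extensions: $\mathbf{P}$ kills all $\Ext^1$'s but is not a pure acyclic complex of flat sheaves, so the conflation $0\rt\mathbf{P}\rt\mathbf{D}\rt\mathbf{A}\rt 0$ gives you nothing about $\mathbf{D}$. Your fallback (``lift the flat cotorsion theory to complexes'') is the right instinct but is not an argument as it stands.

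The paper takes a different and much shorter route: it simply cites \cite[Theorem 2.5]{HS}, where the pair $(\CPFX,\CPFX^\perp)$ is shown to be a \emph{complete} cotorsion theory because it is cogenerated by a set (a cardinality bound $\kappa\geq\max\{|\CO_X|,\aleph_0\}$ together with the Eklof--Trlifaj/small object machinery), so both halves of completeness are already available, and the Corollary is just that theorem combined with the identification $\CPFX^\perp=\mathbf{C}(\mathrm{Pinj}X)$ of Theorem \ref{i2}. If you want to keep a Salce-type construction, the repair is small: instead of an arbitrary contractible $\mathbf{P}$, map onto $\C$ the direct sum of disc complexes $\bigoplus_{i\in\Z}\overline{\CG^i}[i]$ built on the flat covers $\CG^i\lrt\C^i$; this complex is contractible \emph{with flat terms}, hence an honest member of $\CPFX$, and flat covers are epimorphisms in $\mathfrak{Qco}X$, so your pushout argument then goes through. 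Alternatively, since both classes are closed under shift, rotating the preenvelope triangle of $\Sigma^{-1}\C$ produces the precover triangle in $\K(X)$ directly --- but only up to contractible summands, which is exactly why the paper phrases its homotopy-level statements in terms of essential images.
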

\begin{proof}
The result follows from \cite[Theorem 2.5]{HS} and Theorem \ref{i2}.
\end{proof}
Let $\KPIFX$ be the essential image(in the sense of \cite{HS}) of
the homotopy category of complexes of pure injective flat
$\CO_X$-modules in $\KFX$.
\begin{corollary}\label{orto}
There is an equivalence $\KPIFX\lrt \DFX$ of homotopy categories.
\end{corollary}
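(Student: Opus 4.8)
The plan is to reduce the assertion to the equivalence $\KCOFX\lrt\DFX$ already recorded above from \cite{HS}, by showing that the subcategory $\KPIFX$ coincides with $\KCOFX$ inside $\KFX$. Since $\DFX=\KFX/\KPFX$ and the quotient functor restricts to an equivalence on $\KCOFX$ whenever $X$ is $n$-perfect, it suffices to identify the class of complexes defining $\KPIFX$ with the class of complexes defining $\KCOFX$.

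The key step is to invoke Proposition \ref{co1}. Because $X$ is coherent and $n$-perfect, that proposition asserts that a flat $\CO_X$-module is pure injective precisely when it is cotorsion. Consequently a complex of $\CO_X$-modules is a complex of pure injective flat sheaves if and only if it is a complex of cotorsion flat sheaves: the two conditions agree term by term. First I would use this to conclude that the homotopy category of complexes of pure injective flat $\CO_X$-modules and the homotopy category of complexes of cotorsion flat $\CO_X$-modules are the same additive category, and hence that their essential images in $\KFX$ (formed in the sense of \cite{HS}) coincide, giving $\KPIFX=\KCOFX$ as triangulated subcategories of $\KFX$.

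With this identification in hand, I would simply transport along the equivalence $\KCOFX\lrt\DFX$ of \cite{HS}. That equivalence is the composite $\KCOFX\hookrightarrow\KFX\lrt\KFX/\KPFX=\DFX$, and it is available here because the standing hypothesis that $X$ is $n$-perfect guarantees both that $\mathfrak{Qco}X$ has enough flats and that $\KPFX\cap\KCOFX=0$. Precomposing this equivalence with the identity $\KPIFX=\KCOFX$ yields the desired equivalence $\KPIFX\lrt\DFX$.

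I do not expect a serious obstacle: essentially all the content is carried by Proposition \ref{co1}, which collapses pure injectivity and cotorsion for flat sheaves once coherence and $n$-perfectness are assumed. The only points that require a line of care are verifying that the \cite{HS} essential-image construction is insensitive to the term-wise replacement of \emph{pure injective} by \emph{cotorsion}, and confirming that the hypotheses of the \cite{HS} equivalence (that $\mathfrak{Qco}X$ has enough flats and that $\KPFX\cap\KCOFX=0$) are indeed in force under $n$-perfectness; both are routine given the earlier sections.
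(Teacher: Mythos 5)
Your argument is correct, but it takes a genuinely different route from the paper's. The paper's own proof does not pass through $\KCOFX$ at all: it lifts the complete cotorsion theory $(\CPFX,\mathbf{C}(\mathrm{Pinj}X))$ of the preceding corollary (itself resting on Theorem \ref{i2} and \cite[Theorem 2.5]{HS}) to a complete cotorsion theory $(\KPFX,\KPIFX)$ in $\KFX$ in the sense of \cite{HS}, and then invokes the general mechanism of \cite{HS} by which such a pair makes the composite $\KPIFX\hookrightarrow\KFX\lrt\KFX/\KPFX=\DFX$ an equivalence. You instead bypass Theorem \ref{i2} and its corollary entirely: Proposition \ref{co1} (valid under the standing coherence hypothesis of that subsection) gives that pure injective flat and cotorsion flat $\CO_X$-modules coincide, hence the two term-wise defined classes of complexes coincide, hence the essential images satisfy $\KPIFX=\KCOFX$ inside $\KFX$, and the recorded \cite{HS} equivalence $\KCOFX\lrt\DFX$ (available since $n$-perfectness guarantees $\KPFX\cap\KCOFX=0$, and $\mathfrak{Qco}X$ has enough flats by \cite{EE}) transports across this identity. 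Your route is shorter and makes the role of coherence transparent, since all the content sits in the module-level Proposition \ref{co1}; what it does not deliver is the stronger structural output of the paper's route, namely that $(\KPFX,\KPIFX)$ is itself a complete cotorsion theory in $\KFX$ --- with its approximation triangles and the characterization of complexes of pure injectives as the orthogonal $\CPFX^{\perp}$ from Theorem \ref{i2} --- which is the actual point of this section beyond the corollary itself. Both arguments ultimately lean on $n$-perfectness (yours via $\KPFX\cap\KCOFX=0$, the paper's inside the proof of Theorem \ref{i2}), so the hypotheses invoked are the same; your two flagged verification points are indeed routine, as the essential image in the sense of \cite{HS} depends only on the class of objects, which Proposition \ref{co1} identifies term by term.
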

\begin{proof}
The pair $(\KPFX,\KPIFX)$ is a complete cotorsion theory in $\KFX$
in the sense of \cite{HS}. Then there is an equivalence $\KPIFX\lrt
\DFX$ of homotopy categories.
\end{proof}


\end{document}